\renewenvironment{proof}[1][Proof]{\noindent\hspace{2em}{\itshape #1: }}{\hspace*{\fill}~\QED\par\endtrivlist\unskip}
\newtheorem{theorem}{$\textbf{Theorem}$}
\newtheorem{lemma}{$\textbf{Lemma}$}
\newtheorem{definition}{$\textbf{Definition}$}
\newcolumntype{C}{>{\centering\arraybackslash}X}
\newcolumntype{P}[1]{>{\raggedright\arraybackslash}p{#1}}
\title{\LARGE \bf Efficient Quantification of Time-Series Prediction Error: Optimal Selection Conformal Prediction}
\author{Boyu Pang and Kostas Margellos
\thanks{The authors are with the Department of Engineering Science, University of Oxford, Oxford, United Kingdom. E-mails: \{boyu.pang, kostas.margellos\}@eng.ox.ac.uk  } 
\thanks{For the purpose of Open Access, the authors have applied a CC BY public copyright licence to any Author Accepted Manuscript (AAM) version arising from this submission. }}
\begin{document}

\maketitle
\thispagestyle{empty}
\pagestyle{empty}

\begin{abstract}
Designing effective score functions in Conformal Prediction (CP) for time-series data remains challenging due to conservativeness and/or computational inefficiency.  We propose Optimal Selection Conformal Prediction (OSCP), which parameterizes the score function via offset terms. To determine these parameters, we formulate a mixed-integer linear program (MILP) that minimizes an empirical proxy of the region size. We further reformulate this optimization problem into a smaller form (fewer constraints) to improve computational efficiency. We provide theoretical guarantees on both validity and CP-efficiency of OSCP. Numerical experiments demonstrate that OSCP reduces uncertainty-set size  and has much lower computational requirements compared to the state-of-the-art method.
\end{abstract}

\section{Introduction} \label{sec: Introduction}
Uncertainty is almost ubiquitous in safety-critical autonomous systems. The dynamic nature of external environments (e.g., autonomous driving) and the incorporation of learning-based methods (e.g., neural-networks) introduce uncertainties into the systems, which pose new challenges to safe controller design and verification. To address this issue, one way is to quantify uncertainty, in particular, for the time-series predictions that arise in multi-stage optimization problems. We classify uncertainty quantification methods into two main streams:

\paragraph{Bayesian methods and concentration-bounds}  Uncertainty quantification using Bayesian methods include Bayesian Inference, Bayesian Neural Network and other variants \cite{fortunato2017bayesian, neal2011mcmc, kingma2013auto}. Alternative approaches include concentration bounds, such as Chernoff-Hoeffding (e.g., \cite{legay2019statistical}), Clopper-Pearson (e.g., \cite{wang2019statistical}). A more detailed review can be found in Section 2.3 of the survey paper \cite{lindemann2024formal}. However, Bayesian methods do not have finite-sample guarantees and become computationally intractable for large-scale problems; concentration bounds are usually conservative in this task. An alternative to these methods is conformal prediction, which is presented next.
\paragraph{Conformal Prediction} 
Conformal Prediction (CP) \cite{vovk2005algorithmic} has emerged as a lightweight, distribution-free alternative that provides $(1-\epsilon)$ confidence regions with rigorous finite-sample validity. Under mild assumptions on a calibration dataset, CP provides tight finite-sample guarantees. Although closely related with scenario optimization \cite{campi2018introduction}, a tool that has also been used for tight uncertainty quantification \cite{margellos2014road}, CP focuses on a different aspect and thus complements the scenario approach (see \cite{o2025bridging} for some connections between the two methods).  Thanks to CP's simplicity, flexibility, and computational-efficiency, CP has been applied in probabilistic safe control synthesis and verification, such as moving-objects avoidance control \cite{lindemann2023safe-a,stamouli2024recursively,tonkens2023scalable,yu2023signal}, probabilistic reachability analysis \cite{bortolussi2019neural,cairoli2021neural,cairoli2023conformal}, probabilistic reachable sets construction \cite{hashemi2023data,hashemi2024statistical,tebjou2023data}. 

Although  CP offers  rigorous probabilistic guarantees, its performance (e.g., size and shape of the region)  depends critically on one of its core components, the \emph{score function}  (also called nonconformity-measure). Designing an appropriate score function for time-series uncertainty quantification is non-trivial. Existing works \cite{stankeviciute2021conformal, sun2024copula, Tumu2024multi-modal, cleaveland2024conformal} either suffer from 1) overly conservative confidence regions \cite{stankeviciute2021conformal}, 2) overfitting or fitting-errors \cite{sun2024copula, Tumu2024multi-modal}, or 3) computational intractability issues \cite{Tumu2024multi-modal,cleaveland2024conformal}.
To the best of our knowledge, no work seems to alleviate these issues at the same time.

In this paper, we aim to overcome these challenges when using CP for time series, thus opening the road for its use in multi-stage optimization and safety problems that require uncertainty quantification over entire trajectories rather than single time-steps. We propose a new parameterized score function that can be optimized to provide empirical minimal-average-radius CP regions. Our CP method generates norm-ball regions that are convex and, as we will show, also tight for multi-dimensional time series, and it exhibits lower computational requirements compared to other algorithmic alternatives. Our proposed approach is directly applicable to control problems such as safe learning-based MPC \cite{lindemann2023safe-a,stamouli2024recursively,tonkens2023scalable,yu2023signal} and multi-stage safety verification \cite{Lindemann2023Conformal-b}.

Our main contributions can be summarized as:

\begin{enumerate}
    \item \textbf{Optimization-based Score Design:} We propose a new parameterized score function for calibrating multi-dimensional time-series data in CP, and a mixed-integer linear programming (MILP) problem to determine optimal parameter solutions. We then provide a re-formulation of this MILP with fewer constraints to enable faster computation.
    
    \item \textbf{Theoretical Guarantees:} We prove that our method is \emph{valid} (concept at the core of CP); we also prove that the optimal parameters  result in determining the empirical minimum average-radius conformal set for any pre-specified normed-ball region.
    
    \item \textbf{CP-Efficiency \& Computational Efficiency:} Numerical experiments on 4 case studies  show that OSCP produces valid conformal regions with the smallest size among baselines \cite{ stankeviciute2021conformal,sun2024copula, Tumu2024multi-modal, cleaveland2024conformal, gal2016dropout}. Compared to the previous State-of-the-Art (SOTA) method \cite{cleaveland2024conformal}, OSCP reduces the conformal set size by 16.03\%, 14.32\%, 14.01\%, 16.93\% on the 4 case studies, respectively; Our optimization program runtime is 8812.0, 78622.0, 14.4, 22.1 times faster on the 4 cases compared to \cite{cleaveland2024conformal}. 
    \item \textbf{Reproducibility:} To enhance reproducibility, the source code of the proposed algorithm is available online at \url{https://github.com/boyupang/OSCP}.
\end{enumerate}

The remainder of this paper is organized as follows:  Section \ref{sec: Problem setting} introduces the problem setting and the conformal prediction. In Section \ref{sec: OSCP} we formally propose our approach, which we term Optimal Selection Conformal Prediction (OSCP). Then Section \ref{sec: num_exp} compares our method with 5 baseline methods via numerical experiments on 3 synthetic datasets and 1 real dataset. Finally, Section \ref{sec: conclusion} concludes the study.

\section{Problem Setting and Conformal Prediction Preliminaries}\label{sec: Problem setting}
\subsection{Problem Setting}
In a discrete-time control system, let $\hat{\mathbf{Y}}_{0:T-1}=(\hat{Y}_0,...,\hat{Y}_{T-1}) \in \mathcal{Y} \subseteq \mathbb{R}^{d\times T}$ and $\mathbf{Y}_{0:T-1}=(Y_0,...,Y_{T-1}) \in \mathcal{Y} \subseteq \mathbb{R}^{d\times T}$ denote the nominal (predicted) and true trajectories of a quantity $Y_t$ (e.g., state, obstacle position) evolving over $T$ time-steps. For example, $\mathbf{Y}_{0:T-1}$ can be the trajectory of a moving obstacle to be avoided, while $\hat{\mathbf{Y}}_{0:T-1}$ is the predicted trajectory given by a neural-network. As another example, $\hat{\mathbf{Y}}_{0:T-1}$ can be the system state trajectory provided by the nominal system model which does not account for noise/disturbance, and thus different from the real trajectory $\mathbf{Y}_{0:T-1}$.
Let $\mathbf{\tilde{Y}}_{0:T-1}:=(\tilde{Y}_0,...,\tilde{Y}_{T-1})$ be the residual sequence capturing the error between the nominal and the true trajectory, i.e.,  $\tilde{Y}_t:=Y_t-\hat{Y}_t$.

We stipulate that the residual sequence $\mathbf{\tilde{Y}}_{0:T-1}$ is a random quantity distributed according to a probability measure $\mathbb{P}$. We assume that the corresponding probability space is defined as appropriate.

We assume throughout that we are given an \emph{exchangeable} calibration dataset $D_{\mathrm{cal}}=\{\mathbf{\tilde{Y}}_{0:T-1}^{(i)}\}_{i=1}^{N}$ containing the residual sequences of historical trajectories, where the term exchangeability is defined as follows: \begin{definition}[Exchangeability] \label{def: exchangeability}
    A collection of $N$ random variables is said to be  exchangeable if the joint probability distribution of any permutation of these $N$ random variables is the same.\footnote{Note that exchangeability is a weaker condition compared to assuming that data are independent and identically distributed (i.i.d.).}
\end{definition} It should be noted that we only assume  exchangeability at the $T$-horizon trajectory level, without requiring exchangeability within the time-horizon; i.e., we allow $\tilde{Y}_t$ and $\tilde{Y}_{t'}$ to be correlated for a given $0 \leq t\neq t' < T$. Another important remark is that if the nominal trajectory is generated from a data-driven model (e.g., neural-network), the calibration dataset $D_{\mathrm{cal}}$ must not involve any residual of training data, as we have assumed that all data come from the same distribution and such an operation would alter it.

For a pre-defined error level $\epsilon \in (0,1)$, our goal is to use $D_{\mathrm{cal}}$ to construct a 
set-valued predictor $\Gamma^\epsilon$ that predicts a closed and bounded abstraction region $\mathcal{C}_t$ (such as a norm-ball) around each $\hat{Y}_t$ such that with at least $(1-\epsilon)$ probability, the true trajectory $\mathbf{Y}_{0:T-1}$ is completely inside these abstraction regions simultaneously for each $t = 0,\ldots,T-1$. 

More formally, we want to use $D_{\mathrm{cal}}$ to construct a set-valued predictor
\begin{equation}
    \Gamma^\epsilon: \mathbf{\hat{Y}}_{0:T-1} \mapsto \otimes_{t=0}^{T-1} \mathcal{C}_t \subset \mathcal{Y}
\end{equation}
that produces $T$ decoupled and \emph{valid} (Def. \ref{def: validity}) abstraction regions for $\mathbf{Y}_{0:T-1}$, where the term validity is defined as follows.
\begin{definition}[Validity, \cite{vovk2005algorithmic}] \label{def: validity}
    Given a desired error level $\epsilon \in (0, 1)$, 
    a statistical abstraction predictor $\Gamma^\epsilon$ is said to be valid if for any new trajectory $ \mathbf{Y}^{(\mathrm{new})}_{0:T-1}$, we have \begin{equation}\label{eq: validity}
    {\mathbb{P}}\left(\mathbf{Y}^{(\mathrm{new})}_{0:T-1} \in \Gamma^\epsilon(\hat{\mathbf{Y}}^{(\mathrm{new})}_{0:T-1})\right) \geq 1-\epsilon ,
\end{equation}
\end{definition} 
where $\mathbb{P}$ is the joint probability measure  of the new residual sequence $\mathbf{\tilde{Y}}_{0:T-1}^{(\mathrm{new})}$ and the calibration data $D_{\mathrm{cal}}$. In the setting of this paper, \eqref{eq: validity} is equivalent to:
\begin{equation}
    {\mathbb{P}}\left(Y^{(\mathrm{new})}_t \in \mathcal{C}_t \text{ for all } t=0,...,T-1 \right) \geq 1-\epsilon.
\end{equation}
\subsection{Conformal Prediction and ICP Framework}\label{subsec: ICP intro}

\cite{vovk2005algorithmic} provides a computationally efficient framework, namely Inductive-Conformal-Prediction (ICP, also called Split-Conformal-Prediction) to satisfy \emph{validity}. Given a score function $\mathcal{A}: \mathbb{R}^{d \times T} \to \mathbb{R}$, we compute scores $R_i = \mathcal{A}(\mathbf{\tilde{Y}}_{0:T-1}^{(i)})$ for each $i \in \{1, \dots, N\}$. The $(1-\epsilon)$-quantile score, $R^{[p]}$, is defined as the $p$-th smallest value $R_i$, where $p = \lceil (1-\epsilon)(N+1) \rceil$.\footnote{Throughout the paper, for any calibration-set  size $m$, we assume $\epsilon \ge 1/(m+1)$, so that the index $\lceil (1-\epsilon)(m+1) \rceil \in \{1,...,m\}$ is well-defined.} Under the exchangeability assumption, any new trajectory with $\mathcal{A}(\mathbf{\tilde{Y}}_{0:T-1}^{(\mathrm{new})}) \leq R^{[p]}$ is guaranteed to be in the valid region.

The geometry of $\mathcal{C}_t$ is determined by $\mathcal{A}$. One crucial challenge is how to define the score function $\mathcal{A}$ such that the resulting CP-region is not conservative (too large). To evaluate the non-conservativeness of a CP method, we use the term \emph{CP-efficiency}: \begin{definition}[CP-Efficiency, \cite{vovk2005algorithmic}]\label{efficiency}
    Given an efficiency metric $\mathcal{L}_{\mathrm{eff}}$, an error level $\epsilon$, and a fixed input $\hat{\mathbf{Y}}_{0:T-1}$, a CP method $\Gamma^\epsilon_1$ is said to be more efficient than another CP method $\Gamma^\epsilon_2$ if \begin{equation}
\mathcal{L}_{\mathrm{eff}}\left(\Gamma^\epsilon_1(\hat{\mathbf{Y}}_{0:T-1})\right) < \mathcal{L}_{\mathrm{eff}}\left(\Gamma^\epsilon_2(\hat{\mathbf{Y}}_{0:T-1})\right). 
    \end{equation} \end{definition} In the context of time-series setting where we need to produce a sequence of $T$ decoupled regions, the efficiency metric $\mathcal{L}_{\mathrm{eff}}$ is usually taken as the sum-of-widths (diameters) or sum-of-volumes (Lebesgue measure) of the $T$ CP-regions.

To design efficient score functions, recent works \cite{sun2024copula, Tumu2024multi-modal, cleaveland2024conformal} adapt score parameters to calibration data via optimization or learning. To maintain exchangeability, $D_{\mathrm{cal}}$ is partitioned into two disjoint subsets: the first for parameter learning and the second for standard ICP calibration. This framework is termed \emph{Re-calibrated ICP}. In the following section, we follow this framework and propose an optimization-based score design.

\section{Optimal Selection Conformal Prediction (OSCP)}\label{sec: OSCP}
We introduce OSCP, an optimization-based method within the Re-calibrated ICP paradigm. By employing a novel additive-offset parameterization, OSCP minimizes the average radius of empirical CP regions using the first half of calibration dataset via a Mixed-Integer Linear Program (MILP), achieving superior CP-efficiency and computational performance compared to the state-of-the-art (SOTA) method.

\subsection{Motivation: Minimal-Average-Radius Regions Containing $p$ Residuals}\label{subsec: method motivation}
Standard ICP for a simple 2D regression task using an $\ell_2$-norm score function, constructing the valid $(1-\epsilon)$ CP region is equivalent to finding the smallest circle (centered at the origin) containing at least $p = \lceil(1-\epsilon)(N+1)\rceil$ residual vectors from the calibration set. Its radius corresponds to the $p$-th smallest score. When extending this to multi-dimensional time series, our goal is to construct a sequence of norm-balls over the horizon $T$ that tightly encapsulates $p$ residual trajectories. While explicitly minimizing the joint geometric volume of these sets is computationally expensive, minimizing the sum of their radii (the average radius) serves as a highly efficient and effective linear proxy. OSCP formalizes this by finding the minimal-average-radius norm-balls that cover the required fraction of calibration residuals.

\begin{figure}[t]
    \centering

    \begin{minipage}{0.4\linewidth}
        \vspace{1.15em}
       \centering
        \includegraphics[width=0.65\linewidth]{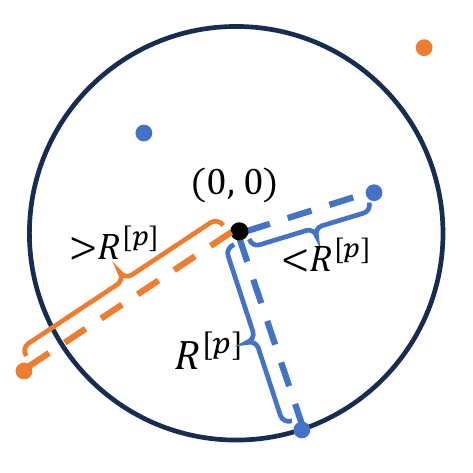}
        \vspace{1.1em}
        \subcaption{2D-regression example}
        \label{subfig: motivation_from_2d_regression}
    \end{minipage}
    \begin{minipage}{0.56\linewidth}
        \centering
        \includegraphics[width=1\linewidth]{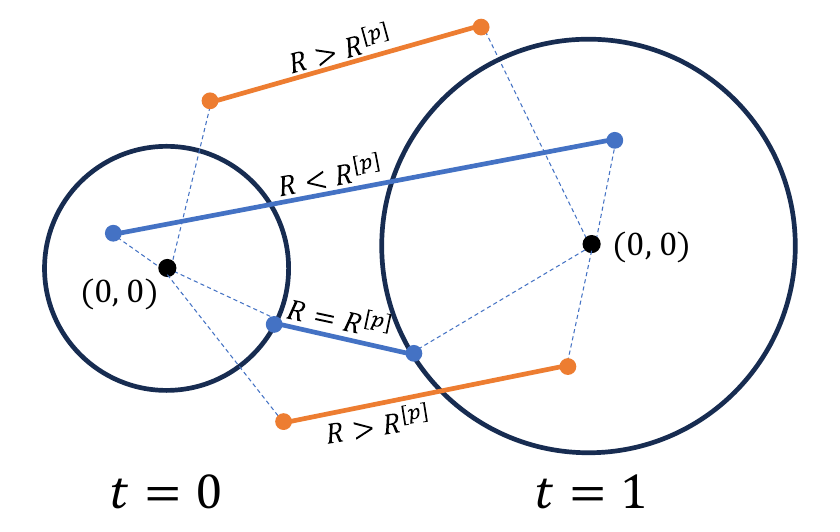}
        \subcaption{time-series data with $d=2, T=2$}
        \label{subfig: motivation_extend_to_time_series}
    \end{minipage}

    \caption{Data with non-conformity scores lower or equal to $R^{[p]}$ are drawn in blue, otherwise in orange. A valid CP contains at least $p$ residuals inside the CP region. In (b), each line segment is a residual sequence; it is contained inside the CP region if both of its ends are inside the circles.} 
    \label{fig:mainB}
\end{figure}

\subsection{OSCP: Algorithm Description, Validity and CP-Efficiency}\label{subsec: algo}
 Given a user-specified norm $||\cdot||: \mathbb{R}^d \rightarrow \mathbb{R}$, we first define the normed-residual-series.

\begin{definition}[Normed-residual-series $\tilde{\boldsymbol{e}}$]\label{def: residual-series}
    The normed-residual-series $\tilde{\boldsymbol{e}}^{(i)}_{0:T-1}=(\tilde{e}^{(i)}_0,...,\tilde{e}^{(i)}_{T-1})^\top$ for a residual sequence $\tilde{\mathbf{Y}}^{(i)}_{0:T-1}$ is defined as  
    \begin{equation}
        \tilde{\boldsymbol{e}}^{(i)}_{0:T-1}:= (||\tilde{Y}_0^{(i)}||, ..., ||\tilde{Y}_{T-1}^{(i)}||)^\top \in \mathbb{R}^T.
    \end{equation}
\end{definition} 

To provide a clear overview of the OSCP approach, we summarize the procedural flow as follows:

\begin{enumerate}
    \item[1)] \textbf{Split calibration data:} We randomly split the calibration dataset $D_{\mathrm{cal}}$ into two disjoint subsets $D_{\mathrm{cal},1}$, $D_{\mathrm{cal},2}$ with size $n_1$, $n_2$. Although there is no requirement on how to split the dataset, in our numerical implementation we use $n_1\approx n_2$. 
    \item[2)] \textbf{Optimize the parameters of score function:} Define the parameterized score function $\mathcal{A}$ by \begin{equation*}
     \mathcal{A}(\tilde{\mathbf{Y}}^{(i)}_{0:T-1}) := \max \left\{ \tilde{e}_0^{(i)} - r_0,..., \tilde{e}_{T-1}^{(i)} - r_{T-1} 
     \right\},
 \end{equation*} where $r_0,...,r_{T-1}$ are parameters that need to be determined. Specifically, we use the first set, $D_{\mathrm{cal},1}$, to formulate an MILP problem to find parameters $r_0^*,...,r_{T-1}^*$ (see Section \ref{subsec: Compute r_t}).

 \item[3)] \textbf{Score:} Once we have $r_0^*,...,r_{T-1}^*$, $\mathcal{A}$ is well-defined. Now we can calculate non-conformity scores $R_i=\mathcal{A}(\tilde{\mathbf{Y}}^{(i)}_{0:T-1})$ for each data point in the second set, $D_{\mathrm{cal},2}$.

 \item[4)] \textbf{Construct the CP region:} Let $p_2=\lceil (1-\epsilon)(n_2+1)\rceil$. Find the $p_2$-th smallest score $R^{[p_2]}$ from $D_{\mathrm{cal},2}$. Output the CP region  $\Gamma^\epsilon: \hat{\mathbf{Y}}_{0:T-1} \mapsto \otimes_{t=0}^{T-1} \mathcal{C}_t$, where the region at each time step is:  
\begin{equation*}\label{formulation of cp-regions}
    \mathcal{C}_t = \{y\in \mathbb{R}^{d} : ||y-\hat{Y}_t|| \leq R^{[p_2]}+r_t^*\}.
\end{equation*}
\end{enumerate}

\begin{theorem}[\emph{Validity}] \label{thm: valid probability guarantee}
    The CP-region $\Gamma^\epsilon$ produced by OSCP is valid, satisfying  trajectory-level coverage:
    ${\mathbb{P}}\left(Y^{(\mathrm{new})}_t \in \mathcal{C}_t, \forall t=0,...,T-1 \right) \geq 1-\epsilon,$ where $\mathbb{P}$ is the joint probability measure of $D_{\mathrm{cal},2}$ and $\tilde{\mathbf{Y}}_{0:T-1}^{(\mathrm{new})}$.
\end{theorem}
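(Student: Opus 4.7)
\medskip

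The plan is to follow the classical split (inductive) conformal prediction validity argument, but with a twist to handle the fact that the parameters $r_0,\ldots,r_{T-1}$ are themselves learned from data. The key observation is that once $D_{\mathrm{cal},1}$ is fixed, the score function $\mathcal{A}$ becomes a deterministic measurable function of its input, and so we can apply the standard exchangeability-based coverage argument on $D_{\mathrm{cal},2}$ together with the new test trajectory.

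First I would condition on $D_{\mathrm{cal},1}$ and argue that the $n_2+1$ random variables $\tilde{\mathbf{Y}}^{(1)}_{0:T-1},\ldots,\tilde{\mathbf{Y}}^{(n_2)}_{0:T-1},\tilde{\mathbf{Y}}^{(\mathrm{new})}_{0:T-1}$ remain exchangeable; this follows because the original joint distribution of $D_{\mathrm{cal}} \cup \{\tilde{\mathbf{Y}}^{(\mathrm{new})}_{0:T-1}\}$ is exchangeable (Definition \ref{def: exchangeability}) and conditioning on a subset preserves exchangeability of the complement. Since $\mathcal{A}(\cdot)$ is a fixed measurable function once the parameters $r_t$ are frozen, applying it coordinate-wise yields that the scores $R_1,\ldots,R_{n_2},R_{\mathrm{new}}$ are themselves exchangeable.

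Next I would invoke the standard quantile lemma for exchangeable scalars: for any exchangeable collection of $n_2+1$ real-valued random variables, the probability that the last one is among the $p_2$ smallest is at least $p_2/(n_2+1)$. With $p_2 = \lceil (1-\epsilon)(n_2+1)\rceil$, this gives
\begin{equation}
\mathbb{P}\bigl(R_{\mathrm{new}} \le R^{[p_2]}\bigr) \;\ge\; \frac{p_2}{n_2+1} \;\ge\; 1-\epsilon,
\end{equation}
where the probability is over both $D_{\mathrm{cal},2}$ and $\tilde{\mathbf{Y}}^{(\mathrm{new})}_{0:T-1}$. Marginalising over $D_{\mathrm{cal},1}$ preserves the bound.

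Finally, I would translate the event $\{R_{\mathrm{new}} \le R^{[p_2]}\}$ into the desired coverage event. Unpacking the definition of $\mathcal{A}$,
\begin{equation}
R_{\mathrm{new}} \le R^{[p_2]} \iff \max_{t}\bigl(\tilde{e}^{(\mathrm{new})}_t - r_t\bigr) \le R^{[p_2]} \iff \|Y^{(\mathrm{new})}_t - \hat{Y}^{(\mathrm{new})}_t\| \le R^{[p_2]} + r_t \;\; \forall t,
\end{equation}
which is exactly the event $\{Y^{(\mathrm{new})}_t \in \mathcal{C}_t \text{ for all } t = 0,\ldots,T-1\}$ by \eqref{formulation of cp-regions}. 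Chaining with the previous inequality yields the claim. The only subtle point, and the main obstacle I would be careful about, is the exchangeability step: one must verify that freezing the parameters $r_t$ using only $D_{\mathrm{cal},1}$ (and not touching $D_{\mathrm{cal},2}$ or the test point) is essential, since otherwise the scores would no longer be exchangeable. Ties in the scores can be handled either by assuming the distribution of $\tilde{\mathbf{Y}}_{0:T-1}$ is continuous (so ties occur with probability zero) or by the usual randomised tie-breaking rule, neither of which changes the argument substantively.
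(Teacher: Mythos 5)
Your proposal is correct and follows essentially the same route as the paper: the paper likewise notes that $\mathcal{A}$ depends only on $D_{\mathrm{cal},1}$ so the scores on $D_{\mathrm{cal},2}$ and the test point are exchangeable, invokes the standard quantile lemma (Lemma 1 of Tibshirani et al., which is exactly your exchangeable-rank argument), and then unpacks the max in $\mathcal{A}$ to translate $R_{\mathrm{new}} \le R^{[p_2]}$ into the per-time-step norm-ball conditions. Your version is slightly more explicit about conditioning on $D_{\mathrm{cal},1}$ and then marginalising, but the substance is identical.
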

\begin{proof}
    Because parameters $r_t^*$ depend exclusively on $D_{\mathrm{cal},1}$, the scores computed on $D_{\mathrm{cal},2}$ remain exchangeable with the score of a new test point. Applying the marginal-coverage guarantee of standard ICP (Lemma 1 in \cite{Tibshirani2019Conformal}), the conformal regions $\Gamma^\epsilon(\hat{\mathbf{Y}}_{0:T-1}^{(\mathrm{new})}):=\left\{\mathbf{Y}\in \mathbb{R}^{d\times T} \mid \mathcal{A}(\mathbf{Y}-\hat{\mathbf{Y}}^{(\mathrm{new})}_{0:T-1}) \leq R^{[p_2]}\right\}$ have property that $
    {\mathbb{P}}\left(\mathbf{Y}^{(\mathrm{new})}_{0:T-1}\in \Gamma^\epsilon(\hat{\mathbf{Y}}^{(\mathrm{new})}_{0:T-1})\right) \geq 1-\epsilon$. Now, for any $\mathbf{Y} \in \mathbb{R}^{d\times T}$,
    
       $ \mathcal{A}(\mathbf{Y}-\hat{\mathbf{Y}}^{(\mathrm{new})}_{0:T-1}) =\underset{t}{\max}\left\{||Y_t-\hat{Y}_t^{(\mathrm{new})}||-r_t^*\right\} \leq R^{[p_2]}\\
        \Leftrightarrow ||Y_t-\hat{Y}_t^{(\mathrm{new})}|| \leq R^{[p_2]} + r_t^*, \quad \forall t=0,\ldots,T-1.$

    Thus, we can guarantee that the CP-region is valid.
\end{proof}

\paragraph{CP-Efficiency of OSCP} Within the class of norm-ball regions induced by the proposed score function, the learned parameters minimize the empirical average radius on $D_{\mathrm{cal},1}$. We formalize this in the theorem below.

\begin{theorem}[Empirical Average Radius Minimization] \label{thm: my method is least conservative}
    Suppose $\{r_0^*, \dots, r_{T-1}^*\}$ are the optimal parameters computed in Step 2 (see also the optimization program in Section \ref{subsec: Compute r_t}), based on $D_{\mathrm{cal},1}$.
The minimum average radius of an empirical CP-region on $D_{\mathrm{cal},1}$ is then equal to $\frac{1}{T}\sum_{t=0}^{T-1}r_t^*$. 
\end{theorem}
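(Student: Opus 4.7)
The plan is to reduce Theorem \ref{thm: my method is least conservative} to the optimality of the MILP in Section \ref{subsec: Compute r_t}. First I would make precise what an ``empirical CP-region'' of norm-ball form is on the first split $D_{\mathrm{cal},1}$: a tuple of radii $(\rho_0,\ldots,\rho_{T-1})$ defining balls $\{y : \|y-\hat{Y}_t\|\leq \rho_t\}$ is admissible exactly when at least $p_1 := \lceil(1-\epsilon)(n_1+1)\rceil$ of the normed-residual-series $\tilde{\boldsymbol{e}}^{(i)}_{0:T-1}$ from $D_{\mathrm{cal},1}$ satisfy $\tilde{e}^{(i)}_t \leq \rho_t$ for every $t$. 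This transplants the finite-sample coverage threshold of ICP onto the first split and captures every valid norm-ball empirical CP-region that one could construct from $D_{\mathrm{cal},1}$ under the user's chosen norm.

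Next I would show that the feasible set of the MILP used to compute $(r_0^*,\ldots,r_{T-1}^*)$ is in one-to-one correspondence with this admissible class. The MILP (as described in Section \ref{subsec: Compute r_t}) assigns a binary selector $z_i$ to each sequence in $D_{\mathrm{cal},1}$, activates the covering inequalities $\tilde{e}^{(i)}_t \leq r_t$ through big-$M$ constraints whenever $z_i = 1$, requires $\sum_i z_i \geq p_1$, and minimizes $\sum_t r_t$ (equivalently, the average $\frac{1}{T}\sum_t r_t$). Given any admissible $(\rho_0,\ldots,\rho_{T-1})$, setting $z_i = 1$ for any $p_1$ indices whose normed-residual-series are covered yields a feasible MILP point with objective value $\sum_t \rho_t$; conversely, any MILP-feasible $(r_t)$ covers the indices with $z_i=1$, at least $p_1$ of them, so the induced balls form a valid empirical CP-region.

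Optimality then gives $\sum_t \rho_t \geq \sum_t r_t^*$ for every admissible tuple, so the average radius of any valid empirical CP-region is bounded below by $\frac{1}{T}\sum_t r_t^*$. The bound is attained by the CP-region with radii $(r_0^*,\ldots,r_{T-1}^*)$, which is itself feasible and therefore admissible. The main obstacle is keeping the two-way reduction honest: admissibility only requires the existence of \emph{some} size-$p_1$ covered subset, so one must argue that optimizing over all feasible $z$-patterns does not inflate the minimum beyond what competing norm-ball CP-regions could achieve, and conversely that no exotic choice of $(\rho_t)$ lies outside the MILP's feasible set. Once this equivalence is spelled out cleanly, the theorem follows immediately from the MILP optimality of $(r_0^*,\ldots,r_{T-1}^*)$.
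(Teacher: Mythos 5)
Your lower-bound half is sound and matches the route the paper takes in its Lemma (Appendix A): identify the norm-ball tuples that cover at least $p_1=\lceil(1-\epsilon)(n_1+1)\rceil$ of the normed-residual-series with the feasible set of the MILP, then invoke optimality of $(r_0^*,\ldots,r_{T-1}^*)$ to conclude $\sum_t\rho_t\geq\sum_t r_t^*$ for every admissible tuple. The paper additionally spends a line justifying why a region built from an \emph{arbitrary} score function $\mathcal{A}$ calibrated on $D_{\mathrm{cal},1}$ necessarily covers at least $p_1$ residual series (because $R^{[p_1]}$ is the $p_1$th smallest score); you fold that into your definition of admissibility, which is a legitimate re-framing.

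The gap is in the attainment step. The theorem is about the region OSCP actually produces when its calibration step is run on $D_{\mathrm{cal},1}$: one evaluates $R_i=\max_t\{\tilde e_t^{(i)}-r_t^*\}$, takes the $p_1$th smallest score $R^{[p_1]}$, and forms balls of radius $R^{[p_1]}+r_t^*$. Declaring the tuple $(r_0^*,\ldots,r_{T-1}^*)$ admissible and ``therefore attaining the bound'' does not establish that OSCP's output has those radii; you must show $R^{[p_1]}=0$. The paper proves this in two steps: $R^{[p_1]}\leq 0$ because the $p_1$ indices with $b_i^*=1$ satisfy $\tilde e_t^{(i)}\leq r_t^*$ for all $t$ and hence have nonpositive scores; and $R^{[p_1]}<0$ is impossible, since it would yield $p_1$ indices with $\tilde e_t^{(i)}<r_t^*$ strictly for all $t$, so that $r_t'=\max_i\tilde e_t^{(i)}$ over those indices would be a feasible MILP point with strictly smaller cost, contradicting optimality. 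Without this argument your proof only restates that the MILP optimum is $\sum_t r_t^*$; it does not show that the empirical CP-region OSCP constructs achieves that value, which is the substantive content of the theorem.
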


Note that the term ``empirical'' refers to the Empirical-Risk-Minimization (ERM) principle. The regions generated by calibrating data in $D_{\mathrm{cal},1}$ via score function $\mathcal{A}(\tilde{\mathbf{Y}}^{(i)}_{0:T-1}) := \max \left\{ \tilde{e}_0^{(i)} - r_0^*,..., \tilde{e}_{T-1}^{(i)} - r_{T-1}^* 
     \right\}$ are not \emph{valid} CP regions, but this Theorem shows that OSCP is \emph{efficient} in the sense of ERM principle. The proof can be found in Appendix \ref{Appendix-proof: proof of my method is least conservative}.

\paragraph{Shapes of CP regions}
 The pre-defined norm determines the shape of CP region. For instance, $\ell_2$-norm produces ball-shaped regions, $\ell_1$ or $\ell_\infty$-norm yields hyper-rectangle regions, and positive-definite matrix $A$-norm results in ellipsoid-shaped regions (see \cite{Xu2024Conformal}). Ellipsoidal regions are flexible, but they are more likely to overfit when there is insufficient data to reflect its shape in high dimensional spaces. When the dataset is small, $\ell_2$-norm is usually more stable.

\subsection{Optimal Parameter Computation}\label{subsec: Compute r_t} \label{Compute r_t}
Suppose we have selected the shape of CP region by choosing a specific norm $||\cdot||$, and we have calculated the normed-residual-series for the data in $D_{\mathrm{cal},1}$. Let $p_1=\lceil (1-\epsilon)(n_1+1)\rceil$. Our goal is to determine optimal parameters $r_0^*,...,r_{T-1}^*$ by using the first calibration dataset $D_{cal, 1}$.

Recalling the motivating example in Section \ref{subsec: method motivation}, we seek to determine a series of norm-balls with radii $r_t$, $t=0,\ldots,T-1$, that have the minimum average radius (equivalently radius sum, i.e., $\sum_{t=0}^{T-1}r_t$) and contain at least $p_1$ \emph{normed-residual-series} $\tilde{\boldsymbol{e}}^{(i)}_{0:T-1}$'s. We can achieve this by means of the following optimization problem: \begin{align}
   & \underset{\{r_t\},  \{b_i\}}{\min} && \sum_{t=0}^{T-1}r_t \tag{MILP} \label{opt-formulation:MILP} \\
   & \text{subject to} && r_t \ge   \tilde{e}_t^{(i)} b_i, \quad \forall t=0,\dots,T-1 \label{opt: line: radius constr} , \\ & && \qquad \qquad \qquad \qquad \ \ \ i=1,\dots,n_1, \nonumber \\
   & && \sum_{i=1}^{n_1}b_i = p_1, \label{opt: line: b constr} \\
   & && b_i \in \{0, 1\}, \quad  \forall i=1,..., n_1. \label{opt: line: integer constr}
\end{align}

This is a mixed-integer linear programming problem that is always feasible, and in fact we can remove a large fraction of redundant constraints to enable faster computation while keeping the optimal solutions of $r_0,...,r_{T-1}$ unchanged.
To reduce the size of this program and improve the associated computational efficiency, two redundant constraint sets can be identified and removed. To this end, suppose that for each t, $\tilde{e}_t^{[p_1]}$ is the $p_1$th smallest value among $\tilde{e}_t^{(1)},...,\tilde{e}_t^{(n_1)}$. Then the first redundant constraint set is defined by \begin{equation}
    S_1 := \left\{i \in \{1,\ldots,n_1\}\mid \tilde{e}_t^{(i)} \leq \tilde{e}_t^{[p_1]}, \ \forall t=0,...,T-1\right\}.
\end{equation} 
This set denotes the indices of all inactive constraints. In the case we are provided (or often it is easy to identify) a feasible solution $\{r_0^{(\mathrm{feas})},..., r_{T-1}^{(\mathrm{feas})}, b_1^{(\mathrm{feas})},...,b_{n_1}^{(\mathrm{feas})}\}$ to the \eqref{opt-formulation:MILP}, then we can neglect a second redundant set \begin{equation}
   S_2 := \left\{ i \in \{1,...,n_1\}\mid  \tilde{e}_t^{(i)} > r_t^{(\mathrm{feas})} \ \text{for} \ \forall t=0,...,T-1\right\},
\end{equation}
which includes all solutions that would lead to a cost (sum of radii) greater than that of the available feasible solution.

Although the method to find such feasible solutions is not unique, one fast and easy-to-implement heuristic procedure is as follows:
for each $i=1,...,n_1$, we calculate the sum of normed residuals $\mathrm{ResSum}(i):= \sum_{t=0}^{T-1} \tilde{e}^{(i)}_t$ and sort $\mathrm{ResSum}(i)$'s in non-decreasing order. Then we pick the first $p_1$ indices, $i_1,...,i_{p_1}$ and let $b_i^{(\mathrm{feas})}=1$ for these indices, $b_i^{(\mathrm{feas})}=0$ otherwise. 
Let $r_t^{(\mathrm{feas})}=\underset{i=i_1,...i_{p_1}}{\max}\tilde{e}_t^{(i)}$. Then we have a feasible solution to \eqref{opt-formulation:MILP}.

        

Once $S_1$ \& $S_2$ are identified, we can set up a modified optimization program as follows. \begin{align}
   & \underset{\{r_t\},  \{b_i\}}{\min} && \sum_{t=0}^{T-1}r_t \tag{MILP-fast} \label{opt-formulation:MILP-fast} \\
   & \text{subject to} && r_t \ge   \tilde{e}_t^{(i)} b_i, \quad \forall t=0,\dots,T-1 \label{opt-fast: line: radius constr} , \ i\in S, \\ 
   & &&  r_t\ge \underset{i\in S_1}{\max}\{\tilde{e}_t^{(i)}\}, \quad \forall t=0,\dots,T-1, \label{opt-fast: line: lb of rt}\\
   & && \sum_{i\in S}b_i = p_1 - |S_1|  \label{opt-fast: line: cstr for b} \\
   & && b_i \in \{0, 1\}, \quad \forall i\in S, \label{opt-fast: line: integer constr} 
\end{align}
\begin{equation*}
    \text{where } S := \{1,2,...,n_1\} \setminus (S_1\cup S_2).
\end{equation*}

The pseudo-code of this faster MILP (including the heuristic) is in Algorithm \ref{algo: Faster program for r_t}.

\begin{algorithm}[htbp]
    \caption{Faster MILP for computing parameters $r_t$}
    \begin{algorithmic}[1] 
    \label{algo: Faster program for r_t}
    \REQUIRE $\{\{\tilde{e}_t^{(i)}\}_{t=0}^{T-1}\}_{i=1}^{n_1}$, $p_1$
    \ENSURE CP parameters $r_0^*,..., r_{T-1}^*$  \\
    // Heuristic of finding feasible parameters $r_t$
    \FOR{$i=1,2,...,n_1$}
    \STATE $\mathrm{ResSum}(i) \leftarrow \sum_{t=0}^{T-1} \tilde{e}_{t}^{(i)}$
    \ENDFOR \\
    \STATE Sort $\mathrm{ResSum}(1),...,\mathrm{ResSum}(n_1)$ in non-decreasing order
    \STATE $i_1,...,i_{p_1} \leftarrow$ indices of first $p_1$ element in the sorted array
    \STATE let $r_t^{(\mathrm{feas})} = \underset{i=i_1,...,i_{p_1}}{\max}\tilde{e}_t^{(i)}$ \\
    \STATE $S_1 \gets \emptyset,\ S_2 \gets \emptyset$ \quad // Finding $S_1$ \& $S_2$
    \FOR{$i=1,2,...,n_1$}
        \IF {$|S_1|=p_1$} \label{line: corner case-if}
            \STATE Exit and output $r_t^* = \tilde{e}_t^{[p_1]}, \forall t$ \label{line: corner case-do}
        
        \ELSIF{$\tilde{e}_t^{(i)} \leq \tilde{e}_t^{[p_1]}$ for all $t=0,...,T-1$}
            \STATE add i into $S_1$
        \ELSIF{$\tilde{e}_t^{(i)} > r_t^{(\mathrm{feas})}$ for all $t=0,...,T-1$}
            \STATE add i into $S_2$
        \ENDIF        
    \ENDFOR
    \STATE Solve \eqref{opt-formulation:MILP-fast} \& output the optimal $\{r_0^*,..., r_{T-1}^*\}$
    \end{algorithmic}
\end{algorithm}

\begin{theorem}[Equivalence of \eqref{opt-formulation:MILP}  \& \eqref{opt-formulation:MILP-fast}] \label{thm: equivalence property for optimization-fast}
    When $|S_1| < p_1$, \eqref{opt-formulation:MILP-fast} is always feasible, and its set of optimal solutions of $\{r_0^*,...,r_{T-1}^*\}$ coincides with that of \eqref{opt-formulation:MILP}. 
    Otherwise, if $|S_1|\geq p_1$, the optimal solution to \eqref{opt-formulation:MILP} is $r_t^* = \tilde{e}_t^{[p_1]}, \ t=0,...,T-1$.
\end{theorem}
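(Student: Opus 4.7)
The plan is to split the argument along the two cases of the theorem, both of which rest on a pigeonhole-style lower bound. I first prove that every feasible (hence every optimal) solution of \eqref{opt-formulation:MILP} satisfies $r_t \geq \tilde{e}_t^{[p_1]}$ for each $t$: the constraint $\sum_{i=1}^{n_1} b_i = p_1$ selects exactly $p_1$ indices, and since at most $p_1-1$ of the $n_1$ values $\tilde{e}_t^{(i)}$ can be strictly less than the $p_1$th smallest $\tilde{e}_t^{[p_1]}$, at least one selected index $i^*$ must satisfy $\tilde{e}_t^{(i^*)} \geq \tilde{e}_t^{[p_1]}$, so constraint \eqref{opt: line: radius constr} forces $r_t \geq \tilde{e}_t^{[p_1]}$. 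In the case $|S_1| \geq p_1$, summing this bound gives $\sum_t r_t^* \geq \sum_t \tilde{e}_t^{[p_1]}$; tightness is immediate by picking any $p_1$ indices from $S_1$, setting $b_i = 1$ on them and $r_t = \tilde{e}_t^{[p_1]}$, which satisfies the radius constraints by the very definition of $S_1$.

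For the main case $|S_1| < p_1$, I would prove that the optimal radii of \eqref{opt-formulation:MILP} and \eqref{opt-formulation:MILP-fast} coincide by two reductions. Starting from an optimal $(r^*, b^*)$ of \eqref{opt-formulation:MILP}, the bound $r_t^* \geq \tilde{e}_t^{[p_1]} \geq \tilde{e}_t^{(i)}$ for $i \in S_1$ lets me iteratively swap any $b_{i_1}^* = 0$ with $i_1 \in S_1$ against some $b_{i_2}^* = 1$ with $i_2 \in S$; the swap preserves feasibility, the total $p_1$, and the cost. Moreover $b_i^* = 0$ for every $i \in S_2$, since otherwise $r_t^* \geq \tilde{e}_t^{(i)} > r_t^{(\mathrm{feas})}$ for every $t$, contradicting $\sum_t r_t^* \leq \sum_t r_t^{(\mathrm{feas})}$. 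The resulting optimum restricts to a feasible point of \eqref{opt-formulation:MILP-fast} with the same cost. Conversely, any feasible $(r, b)$ of \eqref{opt-formulation:MILP-fast} extends to a feasible $(r, b')$ of \eqref{opt-formulation:MILP} by setting $b_i' = 1$ on $S_1$ and $b_i' = 0$ on $S_2$: the cardinality equals $|S_1| + (p_1 - |S_1|) = p_1$, and constraint \eqref{opt-fast: line: lb of rt} is precisely what guarantees the original radius constraints for $i \in S_1$. Feasibility of \eqref{opt-formulation:MILP-fast} itself then follows by applying the same swap procedure to the explicit heuristic feasible solution constructed right before the program is stated.

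The main obstacle I expect will be making the swap argument airtight. Two subtleties must be handled: (i) the pigeonhole bound $r_t^* \geq \tilde{e}_t^{[p_1]}$ is exactly what renders $b_{i_1} = 1$ admissible for any $i_1 \in S_1$ after a swap, so the lower bound is not incidental but structural; and (ii) the elimination of $b_i = 1$ on $S_2$ must be justified using only the availability of a feasible solution (rather than optimality per se), so that the final canonical form with $b_i = 1$ on all of $S_1$ and $b_i = 0$ on all of $S_2$ is reachable at the optimal cost, and the implicit count $|S_2| \leq n_1 - p_1$ is consistent with $\sum_{i \in S} b_i = p_1 - |S_1|$ from \eqref{opt-fast: line: cstr for b}.
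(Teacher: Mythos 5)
Your proposal is correct and follows essentially the same route as the paper: the pigeonhole bound $r_t \geq \tilde{e}_t^{[p_1]}$ for every feasible point of \eqref{opt-formulation:MILP}, the exclusion of $S_2$ by comparing the optimal cost against the heuristic feasible solution, and the two-way transfer of optimal $\mathbf{r}$'s between the programs all appear in the paper's argument, with your iterative swap being an equivalent restatement of the paper's direct selection of $p_1-|S_1|$ indices outside $S_1$. If anything, deriving feasibility of \eqref{opt-formulation:MILP-fast} by canonicalizing the explicit heuristic solution is slightly more careful than the paper's one-line feasibility claim, which picks $p_1$ indices from $S$ even though constraint \eqref{opt-fast: line: cstr for b} only admits $p_1-|S_1|$ of them.
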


The corresponding proof is in Appendix \ref{Appendix-proof: proof of equivalence property of opt-fast&opt}. Theorem \ref{thm: equivalence property for optimization-fast} implies that when $|S_1| < p_1$, we can solve \eqref{opt-formulation:MILP-fast}
to find optimal parameters $r_0^*,..., r_{T-1}^*$. The rough idea is that to make the choice of $r_t$'s optimal, we must always consider containing residual-time-series from $S_1$ but not considering containing those from $S_2$. Thus, when $S_1 < p_1$, solving \eqref{opt-formulation:MILP} is equivalent to solve \eqref{opt-formulation:MILP-fast}.
On the other hand, when $|S_1| \geq p_1$ (although not common in practice), there are already more than $p_1$ residual-time-series to be contained inside the norm-balls, so we can simply choose $r_t = \tilde{e}_t^{[p_1]}$, $t=0,\ldots,T-1$, as the optimal parameters.

When the error level $\epsilon$ is small, \eqref{opt-formulation:MILP-fast} usually can remove a large number of mixed-integer constraints in \eqref{opt: line: radius constr} and integer variables $b_i$'s, which makes the computation much faster. The detailed results of increased running speed can be seen in Section \ref{subsec: results of num exp}.

\section{Numerical Experiments}\label{sec: num_exp}

To demonstrate the performance of our method, we test on both simulated and real time-series with different time horizons $T$, dimensions $d$, and calibration dataset sizes $N$, taken from \cite{sun2024copula}. We compare our method with 5 baseline uncertainty quantification (UQ) methods, and the results show that among all \emph{valid} alternatives, the \emph{CP-efficiency} of our proposed approach outperforms the state-of-the-art method on all case studies.

\paragraph{Baseline Approaches}
We selected MC-dropout \cite{gal2016dropout} and CF-RNN \cite{stankeviciute2021conformal} as the baseline approaches for Bayesian UQ method and CP for time series, respectively; as well as three recent CP methods for time series, namely, CopulaCPTS \cite{sun2024copula} and LCP \cite{cleaveland2024conformal} as parameter optimization methods, and CRD \cite{Tumu2024multi-modal} as a convex CP baseline. Since both CRD and our method allow users to specify shapes, we test hyper-rectangular \& ellipsoidal shapes of CRD (denoted as CRD-Rect \& CRD-Ell, respectively) and ball \& ellipsoidal shapes of our method (OSCP-$\ell_2$ \& OSCP-Ell).

\subsection{Synthetic Datasets}

\paragraph{Particle trajectory} According to \cite{sun2024copula}, the first two datasets are generated from the Interacting Particle System \cite{Kipf2018Neural}, where Gaussian noise with standard deviations $\sigma=0.01$ and $0.05$ are added to the dynamics of particle simulations in two datasets, respectively. For each case, data is split into 2000/500/500 for training, calibration, testing, respectively. A prediction model is trained to predict the future dynamics $\mathbf{Y} \in \mathbb{R}^{d \times T}$ given the past observations $\mathbf{X} \in \mathbb{R}^{d \times \tau}$ of the particle simulation with $\tau = 35, \ T=25, \ d=2$. Then all UQ methods are evaluated according to the procedures stated in Appendix \ref{Appendix: experiment details}. For UQ methods that require a further split of calibration dataset, the split ratio is set to be $0.5/0.5$ for $D_{\mathrm{cal},1}$ \& $D_{\mathrm{cal},2}$ except LCP. We note that the optimization program in LCP becomes computationally intractable for this split, so we adopt $0.1/0.9$ ratio for LCP.

\paragraph{Drone trajectory}
The drone trajectory dataset is generated from \cite{sakai2018pythonroboticspythoncodecollection} with added Gaussian noise of $\sigma=0.02$. The data-split is $600/200/200$ for training/calibration/testing. The prediction model forecasts a drone's future trajectory given its past observations ($\tau = 60, \ T=10, \ d=3$). After training the prediction model, all UQ methods are evaluated in a similar manner. For methods requiring a split of the calibration dataset, a split-ratio of $0.5/0.5$ is adopted (including LCP).

\subsection{Real Dataset: Covid-19 Daily Cases}
We also conduct a case study on the real dataset, UK Covid-19 Daily Cases. We use the preprocessed dataset from \cite{sun2024copula}.\footnote{The original Covid-19 dataset can be downloaded at https://coronavirus.data.gov.uk/} Each time-series sample in the preprocessed Covid-19 dataset corresponds to $150$-day daily cases from mid-September $2020$ to mid-February $2021$ in a region in the UK. Then, $500/160/80$ samples are used for training/calibration/testing. The prediction model takes $100$ days of data as input, and outputs the subsequent $50$ days, i.e., $\tau=100, T=50, d=1$. For UQ methods that require a further split of the calibration data, the split ratio was set to $0.5/0.5$.

\subsection{Numerical Results}\label{subsec: results of num exp}

\begin{figure}[t]
    \centering
    \includegraphics[width=1\linewidth]{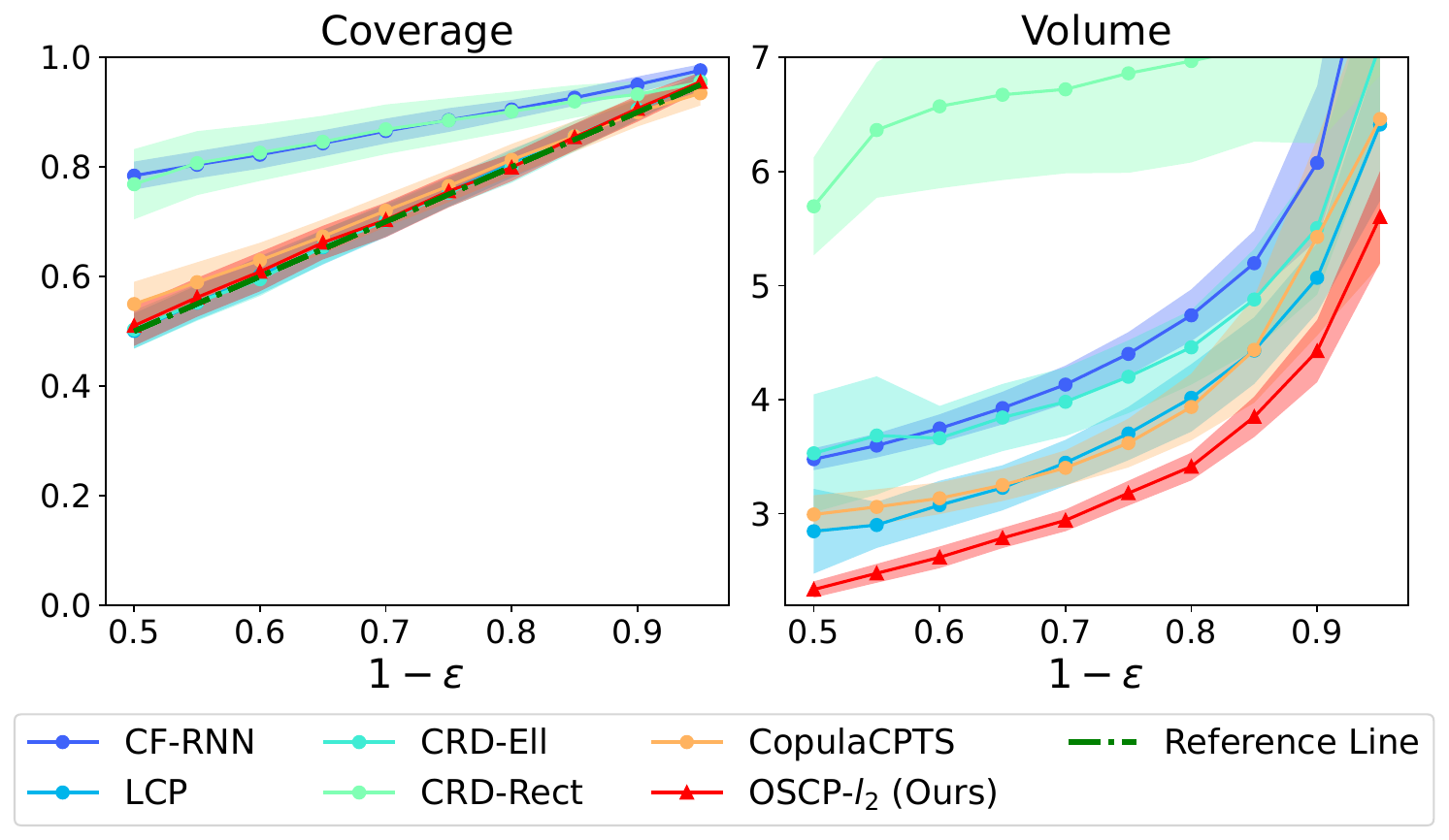}
    \caption{Performance visualization on the Particle Dataset ($\sigma=0.05$). The dashed reference line in the Coverage graph denotes the target confidences, and only methods with coverage curves at or above this line achieve the target coverages. In the Volume graph, curves closer to the bottom indicate better performance (less conservative).}
    \label{fig: cov_area_fig_for_maintext}
\end{figure}

For target confidence levels from 0.5 to 0.95 (10 values), we tested each UQ method with 50 runs (random splits of calibration and test set but with the same proportion) on each dataset. OSCP aims at producing minimal radius norm-balls; yet it is not straightforward to compare that radius with some UQ methods whose outcome is not a norm-ball one. Thus, we consider comparing the total volume (sum of per-step volume/area/length depending on the dimension) of confidence sets. Detailed descriptions of the comparison setup \&  evaluation metrics are  in the Appendix \ref{Appendix-exp: setup n eval details}.

Our method outperforms all the baselines on 4 case studies for all 10 confidence levels ($0.5$ to $ 0.95$). Specifically, compared to previous SOTA method LCP \cite{cleaveland2024conformal} (the one with the smallest total volume among baselines that have empirical coverage no smaller than target ones),  our method with $\ell_2$-norm, OSCP-$\ell_2$, reduces the total confidence region size (on average) by \textbf{16.03\%, 14.32\%, 14.01\%, 16.93\%} on dataset \textbf{Particle ($\sigma=0.01$)}, \textbf{Particle ($\sigma=0.05$)}, \textbf{Drone}, \textbf{Covid-19}, respectively. When using an ellipsoidal confidence region, our method achieves further size reductions (see Appendix \ref{Appendix-exp-details: visualize results}). 

Some of the experiment results (with standard error) are shown in Figure \ref{fig: cov_area_fig_for_maintext}, and the complete results \& visualizations are in Appendix \ref{Appendix-exp-details: visualize results}.  In Figure \ref{fig: cov_area_fig_for_maintext}, it can be seen that our method returns the smallest-volume confidence region among all alternatives, while achieving the target confidence levels. 


%

\paragraph{Runtime comparison} We also compare the runtime used in solving the optimization problem between our method and the previous SOTA (LCP, \cite{cleaveland2024conformal}). For the Particle ($\sigma=0.05$) dataset, LCP sometimes reaches the time limit and terminates the simulation at $10000$s, so the actual computing time is higher than the reported result. In Table \ref{Table: runtime comparison}, when target confidence is set as $0.9$, our method is \textbf{8812.0}, \textbf{78622.0}, \textbf{14.4}, \textbf{22.1} times faster than LCP on the 4 datasets, respectively. 
\begin{table}[htbp] 
  
\caption{Comparison of optimization runtime (in sec) for target confidence $1-\epsilon=0.9$}
\label{Table: runtime comparison}
  \centering 
    \resizebox{\linewidth}{!}{
    \begin{tabular}{ccc}
    \toprule
        Case Study & Previous SOTA \cite{cleaveland2024conformal} & OSCP-$\ell_2$  \\
        \midrule
        
        Particle ($\sigma=0.01$) & 1215.002 {\scriptsize $\pm$ 1450.119} & 0.137  {$\pm$ \scriptsize 0.057} \\
         
        Particle ($\sigma=0.05$)  & $>$9392.66 {\scriptsize $\pm$ 1358.109} & 0.119  {$\pm$ \scriptsize 0.034} \\
        
        Drone &  0.151 {\scriptsize $\pm$ 0.033} & 0.010 {\scriptsize $\pm$ 0.007} \\
        
        Covid-19 &  0.549 {\scriptsize $\pm$ 0.166} & 0.025 {\scriptsize $\pm$ 0.011} \\
    \bottomrule
    \end{tabular}}

\end{table}

\section{Conclusion}
\label{sec: conclusion}
In this work, we propose a new parameterized score function for conformal prediction in multi-dimensional time series and an optimization program to determine an optimal parameter set. We prove the validity and CP-efficiency of our method, showing that optimizing these parameters is equivalent to determining the minimum-average-radius CP regions with a pre-specified norm-ball description. Numerical results on four different datasets (synthetic and actual data) demonstrate that our method outperforms alternative approaches, while having much lower computational requirements.


\bibliography{ref}

@inproceedings{cleaveland2024conformal,
  title={Conformal prediction regions for time series using linear complementarity programming},
  author={Cleaveland, Matthew and Lee, Insup and Pappas, George J and Lindemann, Lars},
  booktitle={Proceedings of the AAAI Conference on Artificial Intelligence},
  volume={38},
  number={19},
  pages={20984-20992},
  year={2024}
}

@article{stankeviciute2021conformal,
  title={Conformal time-series forecasting},
  author={Stankeviciute, Kamile and M Alaa, Ahmed and van der Schaar, Mihaela},
  journal={Advances in Neural Information Processing Systems},
  volume={34},
  pages={6216--6228},
  year={2021}
}

@article{lindemann2023safe-a,
  title={Safe planning in dynamic environments using conformal prediction},
  author={Lindemann, Lars and Cleaveland, Matthew and Shim, Gihyun and Pappas, George J},
  journal={IEEE Robotics and Automation Letters},
  year={2023},
  volume = {8},
    number = {8},
  page = {5116-5123}
}

@inproceedings{Lindemann2023Conformal-b,
author = {Lindemann, Lars and Qin, Xin and Deshmukh, Jyotirmoy V. and Pappas, George J.},
title = {Conformal Prediction for {STL} Runtime Verification},
year = {2023},
booktitle = {Proceedings of the ACM/IEEE 14th International Conference on Cyber-Physical Systems (with CPS-IoT Week 2023)},
pages = {142--153},

series = {ICCPS '23}
}

@book{vovk2005algorithmic,
  title={Algorithmic learning in a random world},
  author={Vladimir Vovk and Alexander Gammerman and Glenn Shafer},
  year={2005},
  isbn = {0387001522},
  publisher={Springer-Verlag},
  address = {Berlin, Heidelberg}
}

@inproceedings{Xu2024Conformal,
author = {Xu, Chen and Jiang, Hanyang and Xie, Yao},
title = {Conformal prediction for multi-dimensional time series by ellipsoidal sets},
year = {2024},

booktitle = {Proceedings of the 41st International Conference on Machine Learning},
articleno = {2268},
numpages = {24},
series = {ICML'24}
}

@inproceedings{Tibshirani2019Conformal,
 author = {Tibshirani, Ryan J and Foygel Barber, Rina and Candes, Emmanuel and Ramdas, Aaditya},
 booktitle = {Advances in Neural Information Processing Systems},


 title = {Conformal prediction under covariate shift},
 
 volume = {32},
 year = {2019}
}

@inproceedings{
sun2024copula,
title={Copula conformal prediction for multi-step time series prediction},
author={Sophia Huiwen Sun and Rose Yu},
booktitle={The Twelfth International Conference on Learning Representations},
year={2024}
}

@InProceedings{gal2016dropout,
  title = 	 {Dropout as a {Bayesian} Approximation: Representing Model Uncertainty in Deep Learning},
  author = 	 {Gal, Yarin and Ghahramani, Zoubin},
  booktitle = 	 {Proceedings of The 33rd International Conference on Machine Learning},
  pages = 	 {1050--1059},
  year = 	 {2016},

  volume = 	 {48},
  organization = 	 {PMLR},

}

@InProceedings{Tumu2024multi-modal,
  title = 	 {Multi-modal conformal prediction regions by optimizing convex shape templates},
  author =       {Tumu, Renukanandan and Cleaveland, Matthew and Mangharam, Rahul and Pappas, George and Lindemann, Lars},
  booktitle = 	 {Proceedings of the 6th Annual Learning for Dynamics \& Control Conference},
  pages = 	 {1343--1356},
  year = 	 {2024},

  volume = 	 {242},
  
  organization =    {PMLR}
}

@InProceedings{Kipf2018Neural,
  title = 	 {Neural Relational Inference for Interacting Systems},
  author =       {Kipf, Thomas and Fetaya, Ethan and Wang, Kuan-Chieh and Welling, Max and Zemel, Richard},
  booktitle = 	 {Proceedings of the 35th International Conference on Machine Learning},
  pages = 	 {2688--2697},
  year = 	 {2018},

  volume = 	 {80},
  organization = {PMLR}
  
}

@misc{sakai2018pythonroboticspythoncodecollection,
      title={{PythonRobotics: a Python code collection of robotics algorithms}}, 
      author={Atsushi Sakai and Daniel Ingram and Joseph Dinius and Karan Chawla and Antonin Raffin and Alexis Paques},
      year={2018},
      eprint={1808.10703},
      archivePrefix={arXiv},
      primaryClass={cs.RO},
      url={https://arxiv.org/abs/1808.10703}, 
}

@misc{fortunato2017bayesian,
  title={Bayesian recurrent neural networks},
  author={Fortunato, Meire and Blundell, Charles and Vinyals, Oriol},
  url={arXiv preprint arXiv:1704.02798},
  archivePrefix={arXiv},
  eprint={1704.02798},
  year={2017}
}

@misc{kingma2013auto,
  title={Auto-encoding variational {Bayes}},
  author={Kingma, Diederik P and Welling, Max and others},
  year={2013},
    url={arXiv preprint arXiv:1312.6114},
  archivePrefix={arXiv},
  eprint={1312.6114},
}

@article{neal2011mcmc,
  title={{MCMC} using {Hamiltonian} dynamics},
  author={Neal, Radford M and others},
  journal={Handbook of markov chain monte carlo},
  volume={2},
  number={11},
  pages={2},
  year={2011},
}

@inproceedings{bortolussi2019neural,
  title={Neural predictive monitoring},
  author={Bortolussi, Luca and Cairoli, Francesca and Paoletti, Nicola and Smolka, Scott A and Stoller, Scott D},
  booktitle={International Conference on Runtime Verification},
  pages={129--147},
  year={2019},
  organization={Springer}
}

@inproceedings{cairoli2021neural,
  title={Neural predictive monitoring under partial observability},
  author={Cairoli, Francesca and Bortolussi, Luca and Paoletti, Nicola},
  booktitle={International Conference on Runtime Verification},
  pages={121--141},
  year={2021},
  organization={Springer}
}

@inproceedings{cairoli2023conformal,
  title={Conformal Quantitative Predictive Monitoring of {STL} Requirements for Stochastic Processes},
  author={Cairoli, Francesca and Paoletti, Nicola and Bortolussi, Luca},
  year={2023},
booktitle={26th ACM International Conference on Hybrid Systems: Computation and Control (HSCC ’23)}
}

@inproceedings{hashemi2023data,
  title={Data-driven reachability analysis of stochastic dynamical systems with conformal inference},
  author={Hashemi, Navid and Qin, Xin and Lindemann, Lars and Deshmukh, Jyotirmoy V},
  booktitle={2023 62nd IEEE Conference on Decision and Control (CDC)},
  pages={3102--3109},
  year={2023},
  organization={IEEE}
}

@article{hashemi2024statistical,
  title={Statistical reachability analysis of stochastic cyber-physical systems under distribution shift},
  author={Hashemi, Navid and Lindemann, Lars and Deshmukh, Jyotirmoy V},
  journal={IEEE Transactions on Computer-Aided Design of Integrated Circuits and Systems},
  volume={43},
  number={11},
  pages={4250--4261},
  year={2024},
  publisher={IEEE}
}

@inproceedings{tebjou2023data,
  title={Data-driven reachability using christoffel functions and conformal prediction},
  author={Tebjou, Abdelmouaiz and Frehse, Goran and others},
  booktitle={Conformal and Probabilistic Prediction with Applications},
  pages={194--213},
  year={2023},
  organization={PMLR}
}

@inproceedings{stamouli2024recursively,
  title={Recursively feasible shrinking-horizon {MPC} in dynamic environments with conformal prediction guarantees},
  author={Stamouli, Charis and Lindemann, Lars and Pappas, George},
  booktitle={6th Annual Learning for Dynamics \& Control Conference},
  pages={1330--1342},
  year={2024},
  organization={PMLR}
}

@article{yu2023signal,
  title={Signal temporal logic control synthesis among uncontrollable dynamic agents with conformal prediction},
  author={Yu, Xinyi and Zhao, Yiqi and Yin, Xiang and Lindemann, Lars},
  journal={arXiv preprint arXiv:2312.04242},
  year={2023}
}

@inproceedings{tonkens2023scalable,
  title={Scalable safe long-horizon planning in dynamic environments leveraging conformal prediction and temporal correlations},
  author={Tonkens, Sander and Sun, Sophia and Yu, Rose and Herbert, Sylvia},
  booktitle={Long-Term Human Motion Prediction Workshop, International Conference on Robotics and Automation},
  year={2023}
}

@article{lindemann2024formal,
  title={Formal verification and control with conformal prediction},
  author={Lindemann, Lars and Zhao, Yiqi and Yu, Xinyi and Pappas, George J and Deshmukh, Jyotirmoy V},
  journal={arXiv preprint arXiv:2409.00536},
  year={2024}
}

@article{margellos2014road,
  title={On the road between robust optimization and the scenario approach for chance constrained optimization problems},
  author={Margellos, Kostas and Goulart, Paul and Lygeros, John},
  journal={IEEE Transactions on Automatic Control},
  volume={59},
  number={8},
  pages={2258--2263},
  year={2014},
  publisher={IEEE}
}

@article{o2025bridging,
  title={Bridging conformal prediction and scenario optimization},
  author={O'Sullivan, Niall and Romao, Licio and Margellos, Kostas},
  journal={arXiv preprint arXiv:2503.23561},
  year={2025}
}

@book{campi2018introduction,
  title={Introduction to the scenario approach},
  author={Campi, Marco C and Garatti, Simone},
  year={2018},
  publisher={SIAM}
}

@article{wang2019statistical,
  title={Statistical verification of hyperproperties for cyber-physical systems},
  author={Wang, Yu and Zarei, Mojtaba and Bonakdarpour, Borzoo and Pajic, Miroslav},
  journal={ACM Transactions on Embedded Computing Systems (TECS)},
  volume={18},
  number={5s},
  pages={1--23},
  year={2019},
  publisher={ACM New York, NY, USA}
}

@incollection{legay2019statistical,
  title={Statistical model checking},
  author={Legay, Axel and Lukina, Anna and Traonouez, Louis Marie and Yang, Junxing and Smolka, Scott A and Grosu, Radu},
  booktitle={Computing and software science: state of the art and perspectives},
  pages={478--504},
  year={2019},
  publisher={Springer}
}

\newpage
\section*{APPENDIX}
\section{Proof of Theorem \ref{thm: my method is least conservative}}\label{Appendix-proof: proof of my method is least conservative} \begin{lemma} \label{lemma: cp_region_lower_bound}
    Fix an error level $\epsilon \in (0,1)$, and a predefined score function $\mathcal{A}$. Fix also any norm, and let $r_t^*$, $t=0,\ldots,T-1$, be the optimal parameters of OSCP's score function. Any valid CP region (with same shape as OSCP's) constructed based on $D_{\mathrm{cal},1}$ has average radius $\frac{1}{T}\sum_{t=0}^{T-1}r_t \geq \frac{1}{T}\sum_{t=0}^{T-1}r_t^*$.
\end{lemma}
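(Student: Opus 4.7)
The plan is to reduce the claim directly to the optimality of $(r_0^*, \ldots, r_{T-1}^*)$ in \eqref{opt-formulation:MILP}. The key observation is that any CP region sharing OSCP's geometric shape is fully described by its per-step radii $(\rho_0, \ldots, \rho_{T-1})$, regardless of the score function $\mathcal{A}$ used to produce it. Thus the abstract score function enters the analysis only through the radii it ultimately prescribes, and the comparison between OSCP and any competing construction reduces to comparing radii directly, which is exactly what the MILP minimizes.

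First I would make empirical validity concrete: a CP region $\otimes_t \mathcal{C}_t$ with $\mathcal{C}_t = \{y : \|y - \hat{Y}_t\| \leq \rho_t\}$ is valid based on $D_{\mathrm{cal},1}$ precisely when at least $p_1 = \lceil (1-\epsilon)(n_1+1)\rceil$ of the calibration trajectories lie inside the product region; this mirrors the $p_1$-th smallest score quantile rule of standard ICP applied to $n_1$ calibration points. By Definition \ref{def: residual-series}, trajectory $i$ lies in the product region iff $\tilde{e}_t^{(i)} \leq \rho_t$ for every $t$. I would then build a feasible point of \eqref{opt-formulation:MILP} out of the radii $(\rho_0, \ldots, \rho_{T-1})$ by selecting exactly $p_1$ indices $i_1, \ldots, i_{p_1}$ for which the covering condition holds simultaneously across all $t$, setting $b_{i_j} = 1$ on those indices and $b_i = 0$ elsewhere. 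Constraints \eqref{opt: line: radius constr}, \eqref{opt: line: b constr}, and \eqref{opt: line: integer constr} are then verified immediately with $r_t$ replaced by $\rho_t$, so the resulting tuple is feasible for (MILP). Optimality of $(r_0^*, \ldots, r_{T-1}^*)$ then gives $\sum_t r_t^* \leq \sum_t \rho_t$, and dividing by $T$ delivers the desired inequality.

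The main subtlety, and where I expect essentially all of the substantive work to sit, is pinning down the intended notion of empirical validity so that the MILP covering constraint is exactly the correct translation, and ensuring that the reduction to per-step radii is lossless for any score function producing OSCP-shaped regions (i.e., that the threshold emerging from calibration can be absorbed into the $\rho_t$'s without altering the covering count). Once that reading -- coverage of at least $p_1$ calibration residual-series and parameterization by radii alone -- is fixed, the argument collapses to a single invocation of MILP optimality; the remainder is routine bookkeeping on indices.
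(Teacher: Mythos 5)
Your proposal is correct and follows essentially the same route as the paper's own proof: the paper likewise observes that any CP region of the prescribed shape built on $D_{\mathrm{cal},1}$ must contain at least $p_1=\lceil(1-\epsilon)(n_1+1)\rceil$ of the calibration residual series (since the region is the sublevel set of the score at the $p_1$-th smallest value $R^{[p_1]}$), notes that the resulting radii therefore yield a feasible point of \eqref{opt-formulation:MILP}, and concludes by optimality of $(r_0^*,\ldots,r_{T-1}^*)$. The "subtlety" you flag about absorbing the calibration threshold into the radii is handled in the paper exactly as you suggest, by relabeling $R^{[p_1]}+r_t$ as the effective per-step radius.
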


\begin{proof}[Proof of Lemma \ref{lemma: cp_region_lower_bound}] We first show that if the set (not necessarily a CP-region) $\Gamma^\epsilon(\hat{\mathbf{Y}}_{0:T-1}):={\otimes}^{T-1}_{t=0}\left\{y \in \mathbb{R}^{d} : ||y-\hat{Y}_t|| \leq r_t\right\}$ contains at least $p_1$ elements out of $\mathbf{Y}^{(i)}$, $i=1,\ldots,n_1$, then it has average radius no smaller than $\frac{1}{T}\sum_{t=0}^{T-1}r_t^*$.

    Let $p_1=\lceil (1-\epsilon)(n_1+1)\rceil$. Consider the set \begin{equation}\label{eq: optimal normed-ball region}
        \Gamma^\epsilon(\hat{\mathbf{Y}}_{0:T-1})^*:={\otimes}^{T-1}_{t=0}\left\{y \in \mathbb{R}^{d} : ||y-\hat{Y}_t|| \leq r_t^*\right\},
    \end{equation}
    where $\{r_t^*\}_{t=0}^{T-1}$ is the optimal solution to the \eqref{opt-formulation:MILP}. Since ${r_0^*, r_1^*, ..., r_{T-1}^*}$ is feasible to \eqref{opt-formulation:MILP}, there are at least
    $p_1$ indices from $\{1,...,n_1\}$ such that  $\tilde{e}^{(i)}_t \leq r_t^*, \forall t=0,1,...,T-1$. Now, since $\{r_t^*\}$ is optimal to the objective of \eqref{opt-formulation:MILP}, then the average radius of
    $\Gamma^\epsilon(\hat{\mathbf{Y}}_{0:T-1})^*$ is the minimum among all norm-ball regions that contains at least $p_1$ elements out of $\mathbf{Y}^{(i)}$, $i=1,\ldots,n_1$.

    Now, consider CP regions constructed on $D_{\mathrm{cal}, 1}$ via the score function $\mathcal{A}$ and a selected shape induced by $||\cdot||$:
\begin{equation}
    \Gamma^\epsilon(\hat{\mathbf{Y}}_{0:T-1}):=\{\mathbf{Y}\in \mathbb{R}^{T\times d}: \mathcal{A}(\mathbf{Y}-\hat{\mathbf{Y}}_{0:T-1})\leq R^{[p_1]}\}.  \nonumber
\end{equation}  
Since $R^{[p_1]}$ is the $p_1$th smallest nonconformity score, then there are at least $p_1$ number of i's satisfying 
\begin{equation}
    \mathcal{A}( \tilde{\mathbf{Y}}^{(i)}_{0:T-1}) \leq R^{[p_1]} 
    \Rightarrow \mathbf{Y}^{(i)}_{0:T-1}\in \Gamma^\epsilon(\hat{\mathbf{Y}}^{(i)}_{0:T-1}). \nonumber
\end{equation}
Then the average radius of $\Gamma^\epsilon(\hat{\mathbf{Y}}_{0:T-1})$ cannot be smaller than that of \eqref{eq: optimal normed-ball region}, which is $\frac{1}{T}\sum_{t=0}^{T-1}r_t^*$.
\end{proof}

\begin{proof}[Proof of Theorem \ref{thm: my method is least conservative}]
 Now with Lemma \ref{lemma: cp_region_lower_bound}, we start proving the Theorem \ref{thm: my method is least conservative}. Given an optimal solution $\{r_0^*,..., r_{T-1}^*, b_1^*,...,b_{n_1}^*\}$ of \eqref{opt-formulation:MILP}, the non-conformity score $R_i$ of each data point is then calculated by:
    \begin{equation}
        R_i:=\mathcal{A}( \tilde{\mathbf{Y}}^{(i)}_{0:T-1}) = \max\{\tilde{e}_0^{(i)}-r_0^*,...,\tilde{e}_{T-1}^{(i)}-r_{T-1}^*\}. \nonumber
    \end{equation}
    To prove the result stated in the theorem, we first show that $R^{[p_1]} = 0$. Let $i_1, i_2,..., i_{p_1}$ be the indices such that $b_i^*=1$. Due to feasibility of $r_t^*$ for any $t=0,..., T-1$, we have
                $\tilde{e}_t^{(i)} \leq r_t^*, \forall i=i_1,..., i_{p_1}$. Thus, for $i=i_1,..., i_{p_1}$, we have $R_i \leq 0$. We then have 
                $R^{[p_1]} \leq \underset{i=i_1,...,i_{p_1}}{\max} R_i \leq 0$.
        Now suppose for the sake of contradiction that $R^{[p_1]} < 0$. Then $\exists i_1', i_2',..., i_{p_1}'$ such that $R_i < 0, \forall i=i_1', i_2',..., i_{p_1}'$. 
        Consequently, we have $\tilde{e}_t^{(i)} < r_t^*, \forall t=0,...,T-1, \forall i=i_1',...,i_{p_1}'$. 
        
        Consider a new solution candidate
        \begin{equation}
            r_t' = \underset{i=i_1',...,i_{p_1}'}{\max}\tilde{e}_t^{(i)}, \quad b_i' = \begin{cases}
                1, & \text{if } i=i_1',...,i_{p_1}'; \\ 
                0, & \text{otherwise}.
                \end{cases}\nonumber
        \end{equation}
            
        It is easy to check that this new solution is feasible to \eqref{opt-formulation:MILP} and $r_t' < r_t^*, \forall t=0,...,T-1$. This contradicts to the fact
        that $\sum_{t=0}^{T-1}r_t^*$ is the minimum cost solution.        

        Thus, we can conclude that $R^{[p_1]} = 0$. Then for each t, the resulting CP-region is 
        \begin{align}
            \{y \in \mathbb{R}^{d} &: ||y-\hat{Y}_t|| \leq  R^{[p_1]}+r_t^*\}
           \nonumber \\
           &\Longleftrightarrow \{y \in \mathbb{R}^{d} : ||y-\hat{Y}_t|| \leq r_t^*\}. \nonumber
        \end{align}
        This completes the proof that the average radius of empirical CP-region of OSCP calculated from $D_{\mathrm{cal}, 1}$ is equal to $\frac{1}{T}\sum_{t=0}^{T-1}r_t^*$, which is the minimum value that a valid CP-region with same shape can attain by Lemma \ref{lemma: cp_region_lower_bound}.
\end{proof}

\section{Proof of Theorem \ref{thm: equivalence property for optimization-fast}}

\label{Appendix-proof: proof of equivalence property of opt-fast&opt}
\begin{proof} \textbf{Case 1}: $|S_1| < p_1$

Feasibility of \eqref{opt-formulation:MILP-fast} is easy to check, as we can always randomly pick $p_1$ indices $i_1,...,i_{p_1} \in S$, and then the solution $\{r_t = \underset{i}{\max} \ \tilde{e}_t^{(i)}\}_{t=0}^{T-1}$, $b_i = \begin{cases} 1, & \forall i=i_1,...,i_{p_1} \\ 0, & \mathrm{otherwise} \end{cases}$ is trivially
feasible to the problem. We will now prove each direction seperately.

\begin{itemize}
    \item[($\Rightarrow$)] w.t.s. Any optimal parameters $\mathbf{r}^*=(r_0^*,...,r_{T-1}^*)$ of \eqref{opt-formulation:MILP}, $\exists \mathbf{b}'=\{b_i', \ i\in S\}$ s.t. $(\mathbf{r}^*,\mathbf{b}')$ is an optimal solution to \eqref{opt-formulation:MILP-fast}.
\end{itemize}

We will first show that the feasibility region of \eqref{opt-formulation:MILP-fast} is a subset of that of \eqref{opt-formulation:MILP}.

First of all, we augment the space of decision variables $(\mathbf{r}, \mathbf{b})=\{r_0,...,r_{T-1}\} \cup \{b_i\}_{i\in S}$ to $(\mathbf{r}, \mathbf{\Bar{b}})=\{r_0,...,r_{T-1}, b_1,...,b_{n_1}\}$, i.e.
the feasibility region of \eqref{opt-formulation:MILP-fast} now becomes \eqref{opt: line: integer constr}, \eqref{opt-fast: line: radius constr}, \eqref{opt-fast: line: lb of rt} \& \eqref{opt-fast: line: cstr for b}. 

Consider following constraints:
\begin{align} 
    & b_i = 1, \quad i\in S_1\label{extra-constr: S1};\\
    & b_i = 0, \quad i\in S_2\label{extra-constr: S2}.
\end{align}
We add these two constraints on $(\mathbf{r}, \mathbf{\Bar{b}})$, which has no effect on the feasible region of original decision variables $(\mathbf{r}, \mathbf{b})$. That being saying, the feasibility region of \eqref{opt-formulation:MILP} is equivalent with that of augmented \eqref{opt-formulation:MILP-fast}, i.e.,
\begin{equation} \label{property: equivalent constraints}
    \eqref{opt: line: radius constr}, \eqref{opt: line: b constr}, \eqref{opt: line: integer constr}, \eqref{extra-constr: S1}, \eqref{extra-constr: S2} \Leftrightarrow \eqref{opt: line: integer constr}, \eqref{opt-fast: line: radius constr}, \eqref{opt-fast: line: lb of rt}, \eqref{opt-fast: line: cstr for b}, \eqref{extra-constr: S1}, \eqref{extra-constr: S2}.
\end{equation}
This result is easy to check: for any solution $r_t, b_i$ satisfying \eqref{extra-constr: S1} \& \eqref{extra-constr: S2},
\begin{align}
    &r_t \ge \tilde{e}_t^{(i)} \cdot b_i, \ t=0,\dots,T-1 , \ i=1,\dots,n_1\nonumber  \\
    &\Leftrightarrow r_t \ge \tilde{e}_t^{(i)} \cdot b_i, \nonumber 
    \ t=0,\dots,T-1 , \ i\in \{1,\dots,n_1\} \setminus S_2\nonumber  \\
       &\Leftrightarrow \begin{cases} r_t \ge \tilde{e}_t^{(i)} \cdot b_i, \ t=0,\dots,T-1 , \ i\in S,\nonumber  \\ 
             r_t \ge\underset{i\in S_1}{\max}\{\tilde{e}_t^{(i)}\}, \ t=0,\dots,T-1. \end{cases} \nonumber
\end{align}
Therefore, under \eqref{extra-constr: S1} \& \eqref{extra-constr: S2}, Constraint \eqref{opt: line: radius constr} $\Leftrightarrow$ Constraints \eqref{opt-fast: line: radius constr}, \eqref{opt-fast: line: lb of rt}.
Also, under \eqref{extra-constr: S1} \& \eqref{extra-constr: S2}, 
\begin{equation}
    \sum_{i=1}^{n_1}b_i = p_1 \Leftrightarrow \sum_{i\in S}b_i = p_1-|S_1|, \nonumber
\end{equation}
hence Constraint \eqref{opt: line: b constr} $\Leftrightarrow$ Constraint \eqref{opt-fast: line: cstr for b}, and thus, we can conclude the result in \eqref{property: equivalent constraints}.
Consequently, we have shown that the feasible region of augmented \eqref{opt-formulation:MILP-fast} is a subset of that of \eqref{opt-formulation:MILP}, which means the optimal objective value of \eqref{opt-formulation:MILP-fast} is no smaller than that of \eqref{opt-formulation:MILP}.

Now we will show that the optimal objective are indeed equal by showing that any set of optimal parameters $\{r_0^*,...,r_{T-1}^*\}$ of \eqref{opt-formulation:MILP} is feasible in \eqref{opt-formulation:MILP-fast}. 

Suppose $(\mathbf{r}^*,\mathbf{b}^*)=(r_0^*,...,r_{T-1}^*,b_1^*,...,b_{n_1}^*)$ is an optimal solution of \eqref{opt-formulation:MILP}.

First, we show that $b_i^*=0$ for $i\in S_2$.
For $\forall i \in S_2$, $ \tilde{e}_t^{(i)} > r_t^{(\mathrm{feas})} \text{for} \ \forall t=0,...,T-1$. Since $r_t^*$ is optimal, 
$\sum_{t=0}^{T-1}r_t^* \leq \sum_{t=0}^{T-1}r_t^{(\mathrm{feas})}$. Then there must be at least one $\hat{t}$ s.t. $r_{\hat{t}}^* \leq r_{\hat{t}}^{(\mathrm{feas})}$. 
This means that $\tilde{e}_{\hat{t}}^{(i)} > r_{\hat{t}}^{(\mathrm{feas})} \geq r_{\hat{t}}^*  $
Thus, to satisfy the constraint \eqref{opt: line: radius constr} in \eqref{opt-formulation:MILP}, it must be that $b_i^*=0$ for $\forall i\in S_2$.

Next, we will show that $\exists \{b_i'\}_{i\in S}$ s.t. $(\mathbf{r}^*,\mathbf{b}')$ is feasible to \eqref{opt-formulation:MILP-fast}. Let $i_1,..., i_{p_1}$ be the indices s.t. $b_i^*=1$. Then by the above result we know that $i_1,...,i_{p_1} \in \{1,...,n_1\}\setminus S_2=S\cup S_1$.
Select $p_1-|S_1|$ indices from $\{i_1,...,i_{p_1}\}\setminus S_1$ (this is always possible as $|S_1| < p_1$) and set $b_i'=1$ for these indices, and set $b_i'=0$ for the rest of the indices in $S$.

Then we have $p_1-|S_1|$ indices i's s.t. $i \in S$ and $\tilde{e}_t^{(i)} \leq r_t^*$ at each t. So constraint \eqref{opt-fast: line: radius constr} \& \eqref{opt-fast: line: cstr for b} are satisfied.
Now, since for $i_1,..., i_{p_1}$, $b_i^*=1$, it means that we have $p_1$ indices i's s.t. $\tilde{e}_t^{(i)} \leq r_t^*$ at each t.
Recall that $\tilde{e}_t^{[p_1]}$ is the $p_1$th smallest element in the sorted non-descending sequence $\{\tilde{e}_t^{(i)}\}_{i=1}^{n_1}$. This means that $
    \tilde{e}_t^{[p_1]} \leq r_t^*, \quad \forall t=0,...,T-1.$ Consequently, for $\forall i \in S_1$, \begin{align}
     & \tilde{e}_t^{(i)} \leq \tilde{e}_t^{[p_1]} \leq r_t^*, \ \forall t=0,...,T-1 \nonumber \ \Leftrightarrow \underset{i\in S_1}{\max}\{\tilde{e}_t^{(i)}\} \leq r_t^*, \nonumber
\end{align}
so constraint \eqref{opt-fast: line: lb of rt} is also satisfied and we can therefore conclude that 
$(\mathbf{r}^*,\mathbf{b}')$ is feasible to \eqref{opt-formulation:MILP-fast}. As optimal value of \eqref{opt-formulation:MILP} is always 
larger or equal to that of \eqref{opt-formulation:MILP-fast}, we can conclude that $(\mathbf{r}^*,\mathbf{b}')$ is optimal solution to 
\eqref{opt-formulation:MILP-fast} and the optimal value of \eqref{opt-formulation:MILP} and \eqref{opt-formulation:MILP-fast} are indeed equal.

\begin{itemize}
    \item[($\Leftarrow$)] w.t.s. For any optimal parameters $\mathbf{r}^*=(r_0^*,...,r_{T-1}^*)$ of \eqref{opt-formulation:MILP-fast}, $\exists \mathbf{b}'=(b_1',b_2',...,b_{n_1}')$ s.t. $(\mathbf{r}^*,\mathbf{b}')$ is an optimal solution to \eqref{opt-formulation:MILP}.
\end{itemize}

Suppose $(\mathbf{r}^*, \mathbf{b}^*)$ is an optimal solution of \eqref{opt-formulation:MILP-fast}. Consider solution $(\mathbf{r}^*, \mathbf{b}')=(r_0^*,...,r_{T-1}^*,b_1',...,b_{n_1}')$, where
\begin{equation}
    b_i'=\begin{cases}
    b_i^*, & i\in S; \\
    1, & i\in S_1; \\
    0, & i\in S_2. \nonumber
\end{cases}
\end{equation}
Then we have \begin{equation}
        \sum_{i=1}^{n_1}b_i' = \sum_{i\in S}b_i^* + |S_1|
        \overset{\text{Constraint } \eqref{opt-fast: line: cstr for b}}{=} p_1-|S_1| + |S_1| = p_1, \nonumber
    \end{equation}
so constraint \eqref{opt: line: b constr} is satisfied.
For constraint \eqref{opt: line: radius constr}, let's first consider the case $i \in S\cup S_2 $, the constraints $ \tilde{e}_t^{(i)} \cdot b_i' \le r_t^*, \ t=0,\dots,T-1$ are trivially satisfied. For case of $i\in S_1$, constraint \eqref{opt-fast: line: lb of rt} in \eqref{opt-formulation:MILP-fast} says that $\tilde{e}_t^{(i)} \cdot b_i'-r_t^*=\tilde{e}_t^{(i)}-r_t^*\leq 0, \  t=0,...,T-1$. 
Combining these results, constraint \eqref{opt: line: radius constr} is satisfied. As a result, the solution $(\mathbf{r}^*,\mathbf{b}')$ is feasible to \eqref{opt-formulation:MILP}.
Since in the proof of $(\Rightarrow)$ we have already shown that the optimal value of \eqref{opt-formulation:MILP-fast} is equal to \eqref{opt-formulation:MILP}, we can conclude that $(\mathbf{r}^*,\mathbf{b}')$ is optimal to \eqref{opt-formulation:MILP}.

\textbf{Case 2}: $|S_1|\geq p_1$

Consider the solution candidate $r_t^* = \tilde{e}_t^{[p_1]}, t=0,...T-1$. We will first show it is feasible to \eqref{opt-formulation:MILP}. Pick arbitrary $p_1$ indices $i_1, i_2,..., i_{p_1}$ from $S_1$, 
then let $b_i=1, \forall i=i_1,..., i_{p_1}$ and let $b_i=0$ otherwise. This guarantees constraints \eqref{opt: line: b constr} \& \eqref{opt: line: integer constr} satisfied. 
Since $\forall i\in S_1$, $\tilde{e}_t^{(i)} \leq \tilde{e}_t^{[p_1]} $ for $ \forall t=0,...,T-1$, constraint \eqref{opt: line: radius constr} 
is also satisfied. Thus, the solution $r_t^* = \tilde{e}_t^{[p_1]}, t=0,...,T-1$ is feasible to \eqref{opt-formulation:MILP}. 

Now, we will show that it is also optimal to \eqref{opt-formulation:MILP}. Suppose, for contradiction, $\exists$ a feasible solution
$\{r_0',..., r_{T-1}'\}$ of \eqref{opt-formulation:MILP} such that the objective value $\sum_{t=0}^{T-1}r_t' < \sum_{t=0}^{T-1}r_t^*$.
Then $\exists r_{\hat{t}}' < r_{\hat{t}}^* = \tilde{e}_{\hat{t}}^{[p_1]}$ for some $\hat{t}$. This means there are less than $p_1$ i's s.t. 
$\tilde{e}_{\hat{t}}^{(i)} \leq r_{\hat{t}}'$. However, constraints \eqref{opt: line: radius constr} \& 
\eqref{opt: line: b constr} together imply that there $\exists$ at least $p_1$ i's such that $\tilde{e}_{\hat{t}}^{(i)} \leq r_{\hat{t}}', \forall t=0,...,T-1$. Contradiction! Thus, the solution
$r_t^* = \tilde{e}_t^{[p_1]}, t=0,...,T-1$ is optimal to \eqref{opt-formulation:MILP}.
\end{proof}

\section{Additional Details of Numerical Experiments}
\label{Appendix: experiment details}
This section presents additional details of the numerical experiments in Section \ref{sec: num_exp}.

\subsection{Additional Information of Experiment Setup and Evaluation Metrics}\label{Appendix-exp: setup n eval details}
\paragraph{Experiment Setup}
All experiments are conducted on Windows 11 (64-bit) machine with i9-13900HX CPU (24 physical cores), 32GB RAM. The computations (including prediction models training) use CPU only. For solving the optimization problems in our method and in \cite{cleaveland2024conformal}, we use the commercial solver Gurobi Optimizer (version 12.0.0 build v12.0.0rc1). 

\paragraph{Performance Evaluation} We mainly focus on testing \emph{validity} and \emph{CP-efficiency} for the UQ methods. In each case study, the dataset is first split into two halves. The first half is used for training the time-series prediction model $f_{\mathrm{pred}}$. To eliminate the variance of performance caused by model's variance, the model is fixed once trained. Then we repeat the following procedures for 50 runs: \begin{enumerate}
    \item Randomly split the second half into calibration-set and test-set with fixed proportion 
    \item For error tolerance ($\epsilon$) from 0.05 to 0.5 (10 different values in total), use calibration-set to train the UQ methods. 
    \item Test the UQ methods on test-set and calculate the \emph{validity} \& \emph{CP-efficiency} metrics (see below) on test-set for $\epsilon=0.05$ to $0.5$.
\end{enumerate} 
Lastly, all the results are averaged over the 50 runs. 

\paragraph{Validity Metric}
At each run, the validity is evaluated by the empirical coverage on the test-set, which is calculated as follows: 
\begin{equation}
    \text{Coverage}(\Gamma^\epsilon) := \frac{1}{|D_{test}|} \sum_{(\mathbf{X}^{(i)}, \mathbf{Y}^{(i)})\in D_{test}} \mathbb{I}\left(\mathbf{Y}^{(i)} \in \Gamma^\epsilon(\mathbf{X}^{(i)})\right).
\end{equation}
\paragraph{CP-Efficiency Metric}
Since the width of the confidence set is difficult to calculate for some UQ methods, we instead consider calculating the total volume (Lebesgue measure) of the confidence sets as \emph{CP-efficiency} metric $\mathcal{L}_{\mathrm{eff}}$ of UQ methods. For $d=1$ \& $d=2$ cases, the "volume" corresponds to length and area, respectively.
For an error tolerance $\epsilon$, the total volume of a UQ method is computed as follows:
\begin{equation}
    \text{Volume}(\Gamma^\epsilon) := \frac{1}{|D_{test}|} \sum_{\mathbf{X}^{(i)}\in D_{test}}Volume\left(\Gamma^\epsilon(\mathbf{X}^{(i)})\right).
\end{equation}

\paragraph{Training Details of time-series Forecasting Model $f_{\mathrm{pred}}$} Since we use the similar experiment settings (e.g., datasets, evaluation metrics, etc.) as in \cite{sun2024copula}, we also use the experiment code of \cite{sun2024copula} to train the underlying time-series prediction model for all 4 case studies.\footnote{Experiment code of \cite{sun2024copula} can be download at https://github.com/Rose-STL-Lab/CopulaCPTS} Here is a brief description of the forecasting model $f_{\mathrm{pred}}$ for 4 case studies. 

The forecasting model for \textbf{Particle Datasets ($\sigma=0.01$ $\&$ $\sigma=0.05$)} is an RNN network with embedding size $=$ 24, where the hidden state is then passed through a linear network to concurrently predict the time-steps. Then the model is trained for 150 epochs and batch size $=$ 150. 

The forecasting model for the \textbf{Drone Dataset} is an RNN with embedding size $=$ 128 that is trained with 500 epochs and batch size $=$ 150.

For the \textbf{Covid-19 Dataset}, the forecasting model is an RNN with embedding size$=$128, and is trained with epochs $ = $ 200 \& batch size $ = $ 50.

\subsection{Detailed Experiment Results and Visualizations}\label{Appendix-exp-details: visualize results}
The complete results of our method with $\ell_2$-norm (OSCP-$\ell_2$) \& with ellipsoidal norm (OSCP-Ell) and of all 5 baselines (LCP \cite{cleaveland2024conformal}, CopulaCPTS \cite{sun2024copula}, CRD \cite{Tumu2024multi-modal}, CF-RNN \cite{stankeviciute2021conformal}, MC-dropout \cite{gal2016dropout}) on all 4 case studies are shown in Table \ref{Table: Performance Comparison}, Figure \ref{fig: particle datasets} \& Figure \ref{fig: drone n covid datasets}. 

In Table \ref{Table: Performance Comparison}, the data in \textbf{bold} have better CP-efficiency (smaller total volume) than previous SOTA (LCP, \cite{cleaveland2024conformal}) and have coverages higher or equal to the target confidences, while the data in \textcolor{gray}{gray} mean the empirical coverages are lower than the target confidences. It is important to note that if the empirical coverage of a method is slightly lower than the target confidence level (e.g. difference of coverages $\leq 2\%$), it does NOT necessarily mean that this method is \emph{invalid}. The small gap in coverage can also be caused by numerical inaccuracy in computation (e.g., matrix inverse, eigen-value computations), or simply the randomness of empirical coverage (which converges to $\lceil(1-\epsilon)\cdot(N_{cal,2}+1)\rceil/(N_{cal,2}+1)$ in probability as $|D_{test}|\rightarrow \infty$, by the weak law of large numbers). However, to ensure a fair comparison of \emph{CP-efficiency}, here we only compare the results whose empirical coverages are no smaller than the target ones.

\subsection{Further Discussions on the Experiment Results}
For deep UQ methods, MC-dropout \cite{gal2016dropout} does not have \emph{validity} guarantees and is thus far below the reference target levels for Particle ($\sigma=0.05$), Drone, and Covid-19 datasets; CF-RNN \cite{stankeviciute2021conformal} is one of the classic works of CP in time series which holds the \emph{validity} guarantee, but it offers overly large confidence regions. For the baselines from newer works, CRD \cite{Tumu2024multi-modal} is unstable and produces excessive large confidence sets that cannot be shown in Figure \ref{fig: drone n covid datasets}. We hypothesize that this is due to the error accumulation during the fitting and estimation procedures of the CRD method. The performance of CopulaCPTS \cite{sun2024copula} is also unstable. It has large variances for some cases (see the area graph with confidence = 85\%, 90\% \&  95\% in Figure \ref{fig: particle datasets}), and also has unstable coverages (see results on Covid-19 dataset). Among all baselines, LCP \cite{cleaveland2024conformal} has the best performance as it achieves target coverages while having smaller volumes than other baselines. However, the computation cost for LCP can be very large (see Table \ref{Table: runtime comparison} in Section \ref{subsec: results of num exp}), and it still has larger volumes than our method, OSCP. 

We can also observe that the size reduction is most significant on the Covid-19 dataset. This dataset has $d=1$, which means the total volume in this case is equivalent as the total width. As our method OSCP is originally built to produce minimal-total-width regions rather than minimal-total-volume ones, it explains why the volume reduction is highest on the \textbf{Covid-19} dataset. 

A general conclusion from the results in Table \ref{Table: Performance Comparison} and Figure \ref{fig: particle datasets} \& \ref{fig: drone n covid datasets} is that our method with $\ell_2$-norm, OSCP-$\ell_2$, outperforms all the baselines on 4 case studies for all 10 confidence levels ($0.5$ to $ 0.95$). Specifically, OSCP-$\ell_2$ has coverages higher than all the target confidence levels, and it achieves volume reductions (average value over 10 confidence levels) of \textbf{16.03\%}, \textbf{14.32\%}, \textbf{14.01\%}, \textbf{16.93\%} compared to previous SOTA (LCP, \cite{cleaveland2024conformal}) on \textbf{Particle ($\mathbf{\sigma=0.01}$)}, \textbf{Particle ($\mathbf{\sigma=0.05}$)}, \textbf{Drone}, \textbf{Covid-19} datasets, respectively.

Besides, our method with ellipsoidal norm, OSCP-$Ell$, achieves volume reduction of \textbf{13.65\%}, \textbf{14.81\%}, \textbf{33.5\%} compared to previous SOTA (LCP, \cite{cleaveland2024conformal}) on \textbf{Particle ($\mathbf{\sigma=0.01}$)}, \textbf{Particle ($\mathbf{\sigma=0.05}$)}, \textbf{Drone}, respectively (since there is no ellipsoid for 1d case, we did not run OSCP-$Ell$ on the Covid-19 dataset). Ellipsoidal norm fitted an ellipsoid shape for each time step using $D_{\mathrm{cal,1}}$, and ellipsoid norm performs better when the residuals exhibit greater non-Gaussianity and/or when the dimension $d$ is larger. This explains why we can observe a more significant volume reduction on Drone dataset ($d=3$) using OSCP-$Ell$ (33.5\%) compared to OSCP-$\ell_2$ (14.1\%), while we cannot see significant differences on the Particle datasets (13.65\% vs. 16.03\% and 14.81\% vs. 14.32\%, respectively). In a word, when $|D_{\mathrm{cal}}|$ is small and we have no prior knowledge of the behavior of the residual at each time step, we can just assume the residual at each time step is Gaussian-error and use OSCP-$\ell_2$. Otherwise, if we know that the residual is highly non-Gaussian and the $|D_{\mathrm{cal}}|$ is not too small, using OSCP-$Ell$ is more preferable.

\subsection{Construction of Ellipsoid-Norm}
Given initial observations $\mathbf{X}$ of a time series, suppose $\mathbf{Y} = (Y_1,...,Y_T) \in \mathbb{R}^{d\times T}$ are the true future values, and $\hat{\mathbf{Y}} = (\hat{Y}_1,...,\hat{Y}_T) \in \mathbb{R}^{d\times T}$ are the predicted values from $f_{\mathrm{{pred}}}$. 
For each time step $t$, the Ellipsoid-norm of residual $Y_t-\hat{Y}_t$ is defined by
\begin{equation}
    \tilde{e}_t = ||Y_t-\hat{Y}_t||:= \sqrt{(Y_t-\hat{Y}_t)^\top\hat{\Sigma}^{-1}(Y_t-\hat{Y}_t)},
\end{equation}
where $\hat{\Sigma}^{-1}$ is the inverse of empirical covariance matrix estimated from $Y_t^{(i)}-\hat{Y}_t^{(i)}$ for $i =1,...,n_1$ (i.e., data from $D_{\mathrm{cal,1}}$). 

We wish to highlight that the ellipsoid norm is fitted for each time step individually, so we have $T$ different ellipsoidal norms in total, which ensures a locally-adaptive shape fitting. To guarantee robust construction of ellipsoid-norms against numerical issues, one can use pseudo-inverses of covariance matrices via the singular-value-decomposition technique instead of the standard inverses (see \cite{Xu2024Conformal}). 

\begin{figure*}[htbp]
    \centering
    \includegraphics[width=1\linewidth]{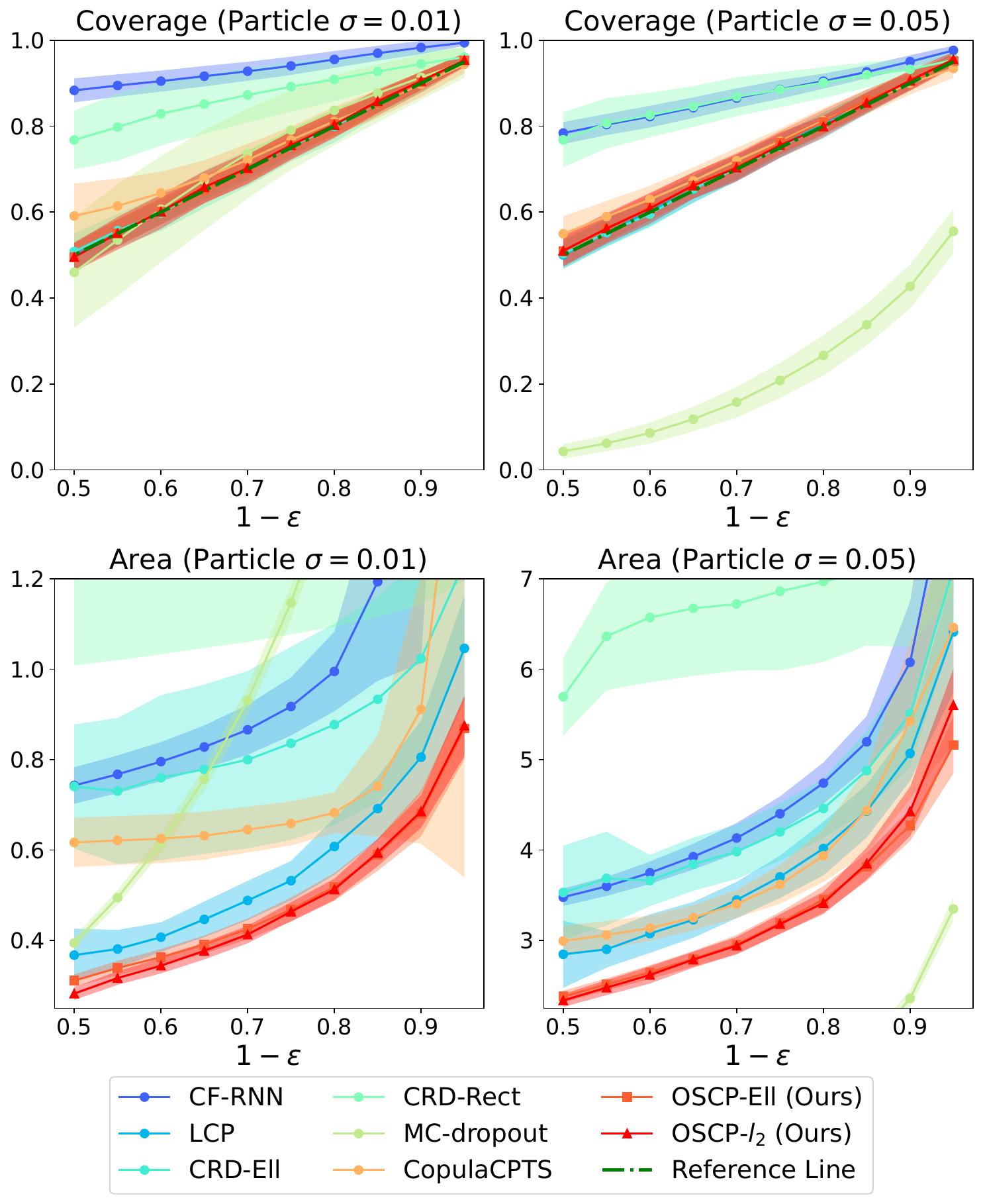}
    \caption{Case studies: Particle Datasets. The dashed reference line denotes the target confidences, and only methods with coverage curves at or above this line achieve the target coverages. The shaded region of each curve is the $\pm$ 1 standard error region. In Area graphs, lower curves indicate better performance.}
    \label{fig: particle datasets}
\end{figure*}
\begin{figure*}[htbp]
    \centering
    \includegraphics[width=1\linewidth]{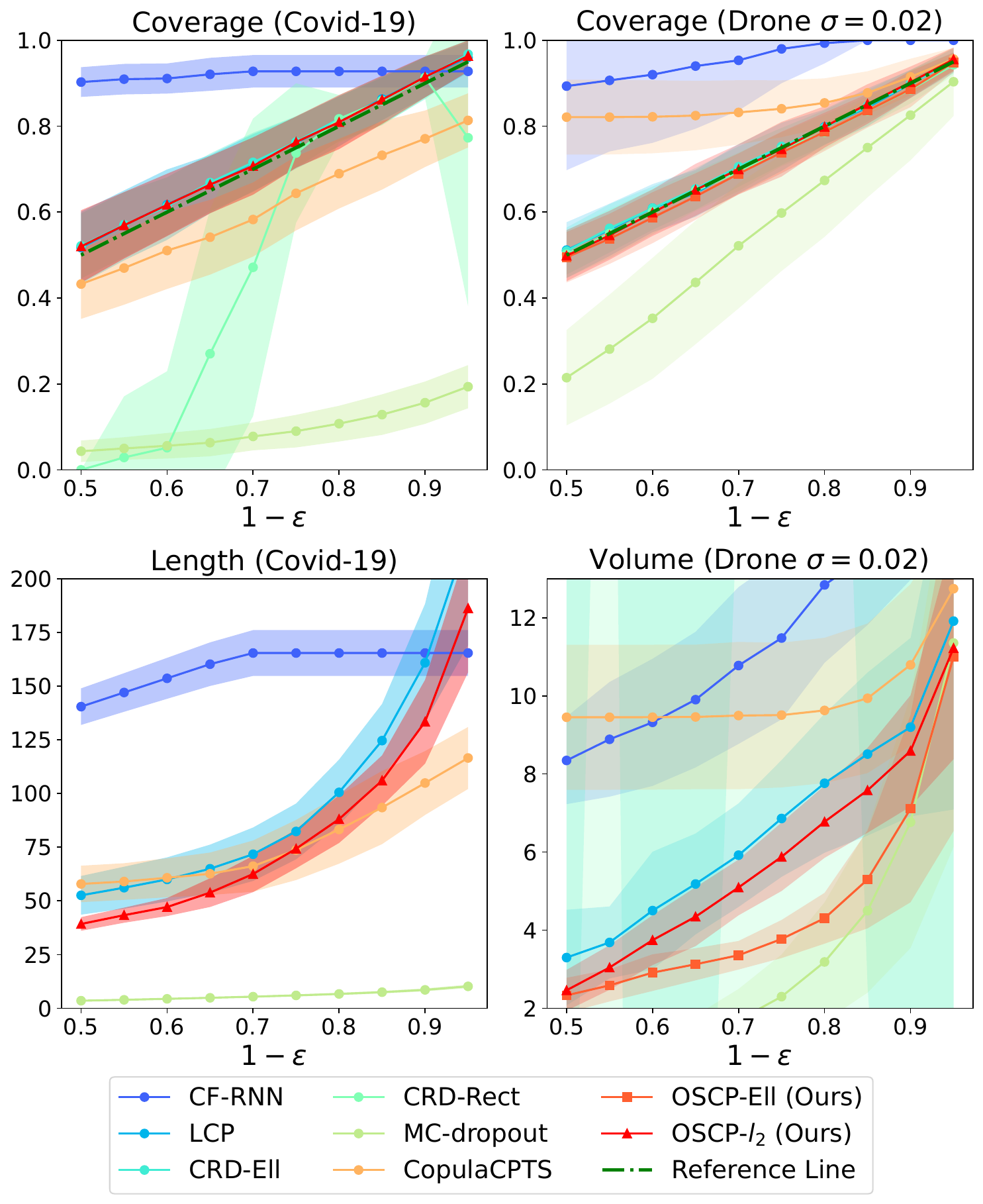}
    \caption{Case studies: Drone Dataset \& Covid-19 Dataset. The dashed reference line denotes the target confidences, and only methods with coverage curves at or above this line achieve the target coverages. The shaded region of each curve is the $\pm$ 1 standard error region. In Length/Volume graphs, lower curves indicate better performance.}
    \label{fig: drone n covid datasets}
\end{figure*}

\begin{table*}[htbp] 
  \centering 
\caption{Detailed performance comparison of all baselines with confidence levels from 85\% to 95\%}
\label{Table: Performance Comparison}
  
  \begin{subtable}{\textwidth} 
    \centering
    \caption{Particle trajectory simulation ($\sigma=0.01$): $d=2$, $T=25$}
    \resizebox{\textwidth}{!}{
    \begin{tabular}{ccccccc}
    \toprule
      \multicolumn{1}{c}{} & \multicolumn{2}{
      c}{Target Coverage: 85\%} & \multicolumn{2}{c}{Target Coverage: 90\%} & \multicolumn{2}{c}{Target Coverage: 95\%} \\ 
      \cmidrule(l){2-7}
   
      Method & Coverage & Area & Coverage & Area & Coverage & Area \\
      \midrule

OSCP-$\ell_2$ (Ours) & \textbf{85.8} {\scriptsize$\pm$ 2.7} & \textbf{0.59} {\scriptsize$\pm$ 0.03 }& \textbf{90.5} {\scriptsize$\pm$ 2.1} & \textbf{0.69} {\scriptsize$\pm$ 0.04} & \textbf{95.4} {\scriptsize$\pm$ 1.6 }& \textbf{0.88} {\scriptsize$\pm$ 0.07}\\

OSCP-$Ell$ (Ours) & \textbf{85.3} {\scriptsize $\pm$ 2.8} & \textbf{0.59} {\scriptsize$\pm$ 0.04} & \textbf{90.2} {\scriptsize$\pm$ 2.4} & \textbf{0.68} {\scriptsize$\pm$ 0.05} & \textbf{95.3} {\scriptsize$\pm$ 1.7} & \textbf{0.87} {\scriptsize$\pm$ 0.07}\\

LCP \cite{cleaveland2024conformal} & 85.4 {\scriptsize$\pm$ 2.2 }& 0.69 {\scriptsize$\pm$ 0.07 }& 90.1 {\scriptsize$\pm$ 2.3 }& 0.81 {\scriptsize$\pm$ 0.08 }& 95.5 {\scriptsize$\pm$ 1.4} & 1.05 {\scriptsize$\pm$ 0.12}\\

CopulaCPTS \cite{sun2024copula} & 85.3 {\scriptsize$\pm$ 3.3 }& 0.74 {\scriptsize$\pm$ 0.11} & 89.8 {\scriptsize$\pm$ 3.0} & 0.91 {\scriptsize$\pm$ 0.3 }&  \textcolor{gray}{94.4 {\scriptsize$\pm$ 2.1 }}&  \textcolor{gray}{1.7 {\scriptsize$\pm$ 1.16}}\\

CRD-Ell \cite{Tumu2024multi-modal} & 85.3 {\scriptsize$\pm$ 3.0} & 0.93 {\scriptsize$\pm$ 0.23} & 90.2 {\scriptsize$\pm$ 2.4 }& 1.02 {\scriptsize$\pm$ 0.22}& 95.6 {\scriptsize$\pm$ 1.4} & 1.24 {\scriptsize$\pm$ 0.19}\\

CRD-Rect \cite{Tumu2024multi-modal}& 92.7 {\scriptsize$\pm$ 4.2} & 1.52 {\scriptsize$\pm$ 0.41 }& 94.5 {\scriptsize$\pm$ 3.3} & 1.55 {\scriptsize$\pm$ 0.41} & 96.1 {\scriptsize$\pm$ 2.2 }& 1.59 {\scriptsize$\pm$ 0.39}\\

CF-RNN \cite{stankeviciute2021conformal} & 97.0 {\scriptsize$\pm$ 1.7 }& 1.19 {\scriptsize$\pm$ 0.22 }& 98.3 {\scriptsize$\pm$ 1.5}& 2.04 {\scriptsize$\pm$ 1.03}& 99.4 {\scriptsize$\pm$ 0.7} & 4.12 {\scriptsize$\pm$ 1.46}\\

MC-dropout \cite{gal2016dropout}& 87.7 {\scriptsize$\pm$ 6.8 }& 1.8 {\scriptsize$\pm$ 0.06 }& 91.4 {\scriptsize$\pm$ 5.2 }& 2.34 {\scriptsize$\pm$ 0.08} & 94.9{\scriptsize $\pm$ 3.6 }& 3.33 {\scriptsize$\pm$ 0.11}\\
       \bottomrule
    \end{tabular}}
  
  \end{subtable}

  \vspace{1em} 

  \begin{subtable}{\linewidth}
    \centering
    \caption{Particle trajectory simulation ($\sigma=0.05$): $d=2$, $T=25$}

    \resizebox{\linewidth}{!}{
    \begin{tabular}{ccccccc}
    \toprule
      \multicolumn{1}{c}{} & \multicolumn{2}{
      c}{Target Coverage: 85\%} & \multicolumn{2}{c}{Target Coverage: 90\%} & \multicolumn{2}{c}{Target Coverage: 95\%} \\ 
      \cmidrule(l){2-7}

      Method & Coverage & Area & Coverage & Area & Coverage & Area \\
      \midrule

      OSCP-$\ell_2$ (Ours) & \textbf{85.4} {\scriptsize $\pm$ 2.4} & \textbf{3.85} {\scriptsize $\pm$ 0.18} & \textbf{90.6} {\scriptsize $\pm$ 2.4} & \textbf{4.43} {\scriptsize $\pm$ 0.27} & \textbf{95.6} {\scriptsize $\pm$ 1.7} & \textbf{5.6} {\scriptsize $\pm$ 0.4}\\

       OSCP-$Ell$ (Ours) & \textbf{86.1} {\scriptsize $\pm$ 2.5} & \textbf{3.81} {\scriptsize $\pm$ 0.16} & \textbf{90.5} {\scriptsize $\pm$ 2.2} & \textbf{4.27} {\scriptsize $\pm$ 0.17} & \textbf{95.2} {\scriptsize $\pm$ 1.7} & \textbf{5.16} {\scriptsize $\pm$ 0.31}\\
      
      LCP \cite{cleaveland2024conformal} & 85.4 {\scriptsize$\pm$ 2.4} & 4.43 {\scriptsize$\pm$ 0.29} & 90.5 {\scriptsize$\pm$ 1.8} & 5.07 {\scriptsize$\pm$ 0.3} & 95.5 {\scriptsize$\pm$ 1.4} & 6.41 {\scriptsize$\pm$ 0.67}\\
      
      CopulaCPTS \cite{sun2024copula}& 85.6 {\scriptsize$\pm$ 2.8} & 4.44 {\scriptsize$\pm$ 0.46} & 90.0 {\scriptsize$\pm$ 2.6} & 5.43 {\scriptsize$\pm$ 0.86} &  \textcolor{gray}{93.4 {\scriptsize$\pm$ 2.2}} &  \textcolor{gray}{6.46 {\scriptsize$\pm$ 1.28}}\\
      
      CRD-Ell \cite{Tumu2024multi-modal} & 85.6 {\scriptsize$\pm$ 2.8} & 4.88 {\scriptsize$\pm$ 0.44} & 90.5 {\scriptsize$\pm$ 2.0} & 5.5 {\scriptsize$\pm$ 0.58 }& 95.6 {\scriptsize$\pm$ 1.1} & 7.11 {\scriptsize$\pm$ 0.71}\\
      
      CRD-Rect \cite{Tumu2024multi-modal} & 92.0 {\scriptsize$\pm$ 3.0} & 7.09 {\scriptsize$\pm$ 0.82}  & 93.2 {\scriptsize$\pm$ 2.4} & 7.3 {\scriptsize$\pm$ 1.05} & 95.4 {\scriptsize$\pm$ 1.8} & 7.9 {\scriptsize$\pm$ 1.06}\\
      
        CF-RNN \cite{stankeviciute2021conformal}& 92.6 {\scriptsize$\pm$ 1.5} & 5.2 {\scriptsize$\pm$ 0.29} & 95.0 {\scriptsize$\pm$ 1.5} & 6.08 {\scriptsize$\pm$ 0.67} & 97.6 {\scriptsize$\pm$ 1.1} & 8.41 {\scriptsize$\pm$ 1.29}\\
        
        MC-dropout \cite{gal2016dropout}& \textcolor{gray}{33.8 {\scriptsize$\pm$ 4.8}} & \textcolor{gray}{1.81 {\scriptsize$\pm$ 0.07}  }& \textcolor{gray}{42.7 {\scriptsize$\pm$ 5.1} }&\textcolor{gray}{ 2.36 {\scriptsize$\pm$ 0.09}} &\textcolor{gray}{ 55.5 {\scriptsize$\pm$ 5.2 }}& \textcolor{gray}{3.35 {\scriptsize$\pm$ 0.13}}\\

      \bottomrule
    \end{tabular}}
  \end{subtable}

  \vspace{1em}

  \begin{subtable}{\textwidth}
    \centering
    \caption{Drone trajectory simulation ($\sigma=0.02$): $d=3$, $T=10$}
\resizebox{\textwidth}{!}{
    \begin{tabular}{ccccccc}
    \toprule
      \multicolumn{1}{c}{} & \multicolumn{2}{
      c}{Target Coverage: 85\%} & \multicolumn{2}{c}{Target Coverage: 90\%} & \multicolumn{2}{c}{Target Coverage: 95\%} \\ 
      \cmidrule(l){2-7}

      Method & Coverage & Volume & Coverage & Volume & Coverage & Volume \\
      \midrule
      
OSCP-$\ell_2$ (Ours) & \textbf{85.2} {\scriptsize$\pm$ 4.6 }& \textbf{7.58 }{\scriptsize$\pm$ 1.09 }& \textbf{90.2} {\scriptsize$\pm$ 3.7 }& \textbf{8.59} {\scriptsize$\pm$ 1.43} & \textbf{95.5} {\scriptsize$\pm$ 2.8 }& \textbf{11.22} {\scriptsize$\pm$ 2.84}\\

OSCP-$Ell$ (Ours) & \textcolor{gray}{83.6 {\scriptsize$\pm$ 3.9}} &  \textcolor{gray}{5.3 {\scriptsize$\pm$ 1.25}} &  \textcolor{gray}{88.6 {\scriptsize$\pm$ 4.2}} &  \textcolor{gray}{7.11 {\scriptsize$\pm$ 2.4}} &  \textcolor{gray}{94.8 {\scriptsize$\pm$ 2.3}} &  \textcolor{gray}{11.01 {\scriptsize$\pm$ 4.47}}\\

LCP \cite{cleaveland2024conformal} & 84.4 {\scriptsize$\pm$ 4.5 }& 8.51 {\scriptsize$\pm$ 2.09} & 89.9 {\scriptsize$\pm$ 3.2 }& 9.2 {\scriptsize$\pm$ 2.29} & 95.2 {\scriptsize$\pm$ 2.9} & 11.91 {\scriptsize$\pm$ 4.83}\\

CopulaCPTS \cite{sun2024copula} & 87.8 {\scriptsize$\pm$ 5.0} & 9.94 {\scriptsize$\pm$ 1.92 }& 91.5 {\scriptsize$\pm$ 4.2 }& 10.79 {\scriptsize$\pm$ 2.07} & 95.6 {\scriptsize$\pm$ 2.6 }& 12.74 {\scriptsize$\pm$ 1.95}\\

CRD-Ell \cite{Tumu2024multi-modal} & 84.7 {\scriptsize$\pm$ 3.7} & 163.64   {\scriptsize$\pm$ 161.07} & 90.0 {\scriptsize$\pm$ 3.1} & 207.23 {\scriptsize$\pm$ 221.37} & 95.1 {\scriptsize$\pm$ 2.1} & 305.0 {\scriptsize$\pm$ 537.19}\\

CRD-Rect \cite{Tumu2024multi-modal} & \textcolor{gray}{83.6 {\scriptsize$\pm$ 3.5} }& \textcolor{gray}{535.7 {\scriptsize$\pm$ 539.58 }}& \textcolor{gray}{88.5 {\scriptsize$\pm$ 3.2}} & \textcolor{gray}{925.28 {\scriptsize$\pm$ 1341.29 }}&  \textcolor{gray}{94.3{\scriptsize $\pm$ 2.4}} &  \textcolor{gray}{1634.36 {\scriptsize$\pm$ 3431.92}}\\

CF-RNN \cite{stankeviciute2021conformal} & 100.0 {\scriptsize$\pm$ 0.0 }& 13.61{\scriptsize $\pm$ 1.79} & 100.0 {\scriptsize$\pm$ 0.0 }& 14.27 {\scriptsize$\pm$ 1.29} & 100.0 {\scriptsize$\pm$ 0.0} & 14.95 {\scriptsize$\pm$ 0.21}\\

MC-dropout \cite{gal2016dropout} & \textcolor{gray}{75.0 {\scriptsize$\pm$ 11.6}} & \textcolor{gray}{4.5 {\scriptsize$\pm$ 2.1}} & \textcolor{gray}{82.6 {\scriptsize$\pm$ 10.5}} & \textcolor{gray}{6.76 {\scriptsize$\pm$ 3.22}} & \textcolor{gray}{90.3 {\scriptsize$\pm$ 8.0}} & \textcolor{gray}{11.35 {\scriptsize$\pm$ 5.22}}\\

\bottomrule

    \end{tabular}}
  \end{subtable}

   \vspace{1em} 

  \begin{subtable}{\textwidth}
    \centering
    \caption{Covid-19 daily cases: $d=1$, $T=50$}
    \resizebox{\textwidth}{!}{
    \begin{tabular}{ccccccc}
    \toprule
      \multicolumn{1}{c}{} & \multicolumn{2}{
      c}{Target Coverage: 85\%} & \multicolumn{2}{c}{Target Coverage: 90\%} & \multicolumn{2}{c}{Target Coverage: 95\%} \\ 
      \cmidrule(l){2-7}

      Method & Coverage & Length & Coverage & Length & Coverage & Length \\
      \midrule

OSCP-$\ell_2$ (Ours) & \textbf{86.2} {\scriptsize$\pm$ 5.6} & \textbf{106.01} {\scriptsize$\pm$ 11.72} & \textbf{91.4} {\scriptsize$\pm$ 4.8} & \textbf{133.4} {\scriptsize$\pm$ 19.5} & \textbf{96.4} {\scriptsize$\pm$ 3.6} & \textbf{186.23} {\scriptsize$\pm$ 29.83}\\

LCP \cite{cleaveland2024conformal}& 86.4 {\scriptsize$\pm$ 4.9} & 124.62{\scriptsize $\pm$ 17.06} & 91.4 {\scriptsize$\pm$ 4.7 }& 160.87 {\scriptsize$\pm$ 27.38} & 96.0 {\scriptsize$\pm$ 3.6} & 220.38{\scriptsize $\pm$ 51.33}\\

CopulaCPTS \cite{sun2024copula} & \textcolor{gray}{73.2 {\scriptsize$\pm$ 8.0}} & \textcolor{gray}{93.45 {\scriptsize$\pm$ 17.03}} & \textcolor{gray}{77.1 {\scriptsize$\pm$ 6.6}} & \textcolor{gray}{104.86 {\scriptsize$\pm$ 14.86}} & \textcolor{gray}{81.4 {\scriptsize$\pm$ 6.3}} & \textcolor{gray}{116.53 {\scriptsize$\pm$ 14.42}}\\

CRD-Ell \cite{Tumu2024multi-modal}  & 85.9 {\scriptsize$\pm$ 5.6} & 1206.44 {\scriptsize$\pm$ 238.9} & 91.2 {\scriptsize$\pm$ 4.9} & 1486.12 {\scriptsize$\pm$ 365.38} & 96.7 {\scriptsize$\pm$ 3.3} & 2074.58 {\scriptsize$\pm$ 549.95}\\

CRD-Rect \cite{Tumu2024multi-modal}  & 85.9 {\scriptsize$\pm$ 5.7} & 798.16 {\scriptsize$\pm$ 167.12} & 91.3 {\scriptsize$\pm$ 4.8} & 999.14 {\scriptsize$\pm$ 277.5} & \textcolor{gray}{77.3 {\scriptsize$\pm$ 39.2 }}& \textcolor{gray}{N/A {\scriptsize$\pm$ N/A}}\\

CF-RNN \cite{stankeviciute2021conformal} & 92.8 {\scriptsize$\pm$ 3.8} & 165.4 {\scriptsize$\pm$ 10.66} & 92.8 {\scriptsize$\pm$ 3.8} & 165.4 {\scriptsize$\pm$ 10.66} &\textcolor{gray}{ 92.8 {\scriptsize$\pm$ 3.8 }}& \textcolor{gray}{165.4 {\scriptsize$\pm$ 10.66}}\\

MC-dropout \cite{gal2016dropout} & \textcolor{gray}{12.8 {\scriptsize$\pm$ 4.7}} & \textcolor{gray}{7.48 {\scriptsize$\pm$ 0.61}} & \textcolor{gray}{15.6 {\scriptsize$\pm$ 4.9} }& \textcolor{gray}{8.55 {\scriptsize$\pm$ 0.7}} & \textcolor{gray}{19.3 {\scriptsize$\pm$ 5.0}} & \textcolor{gray}{10.19 {\scriptsize$\pm$ 0.83}}\\

      \bottomrule

        \end{tabular}}
  \end{subtable}

\end{table*}

\end{document}